\newtheorem{theorem}{Theorem}[section]
\newtheorem{remark}{Remark}[section]
\newtheorem{lemma}{Lemma}[section]
\newtheorem{claim}{claim}
\title{Liouville theorem for elliptic equations with a source reaction term involving\\ the product of the function and its gradient in $\mathbb R^n$}
\author{Xi-Nan Ma}
\address{Department of Mathematics \\University of Science and Technology of China, Hefei, China}
\email{xinan@ustc.edu.cn}
\author{Wangzhe Wu}
\address{Institute of Mathematics \\Academy of Mathematics and Systems Science, Chinese
Academy of Sciences, Beijing, 100190, China}
\email{wuwz18@mail.ustc.edu.cn}
\begin{document}
	\begin{CJK}{UTF8}{gbsn}
	\pagestyle{fancy}

\fancyhead{}                                                          
\fancyhead[CO]{Liouville theorem for a class elliptic equations }

\fancyhead[CE]{\leftmark}

	\begin{abstract}
		We improve the Liouville theorem for the equation $-\Delta v = v^p |\nabla v|^q$ in $\mathbb R^n$, which was studied by  Bidaut-V\'eron,  Garc\'ia-Huidobro,
		and  V\'eron \cite{MR3959864}. The proof is based on a differential identity and Young inequality.
	\end{abstract}

	\maketitle

	\section{Introduction}
	We study the equation
	\begin{equation}\label{intro_equ1}
		-\Delta v = v^p |\nabla v|^q, v\geq 0, \text{ in }\mathbb R^n ,
	\end{equation}
	with $p \geq 0, p + q > 1, 0\leq q < 2$.
	If $q = 0$, by Gidas and Spruck \cite{MR615628}, we know when $1 < p < \frac{n + 2}{n - 2}$, all the nonnegative solutions are zero.  Gidas, Ni and Nirenberg \cite{MR634248}  proved that for $p = \frac{n + 2}{n - 2}$, all the positive solutions with reasonable behavior at infinity, namely $v = O(|x|^{2 - n})$ , are radially symmetric about some point, and have the form
	$$
	u = \Big[\frac{\lambda \sqrt{n(n - 2)}}{\lambda^2 + |x - x_0|^2} \Big]^{\frac{n - 2}{2}}.
	$$
	This uniqueness result, as was pointed out by R. Schoen, is in fact equivalent to the geometric result due to Obata \cite{MR303464}.
	
	Due to Caffarelli, Gidas and Spruck \cite{MR982351}, the treatment of the critical case $p = \frac{n + 2}{n - 2}$ was made possible thanks to a completely new approach based upon a combination of moving plane analysis and geometric measure theory.
	Later, Chen-Li \cite{MR1121147} provided a simpler proof. For more application of the differential identity and integration by parts, one can refer to \cite{MR4753063, MR4519639}. Recently, Catino and Monticelli \cite{catino} get some classification results on Riemannian manifolds. Besides the group of Youde Wang use Nash-Moser iteration to get some gradient estimates and Liouville type theorems for complete Riemannian manifolds \cite{ he2024nashmoseriterationapproachlogarithmic, he2024optimalliouvilletheoremslaneemden, MR4703505, MR4559367}.  
	
	If $p = 0$, this is the Hamilton-Jacobi equation. By P. L. Lions \cite{MR833413}: any $C^2$ solutions in $\mathbb R^n$ have to be a constant for $q > 1$. For the $p$-laplacian case, one can refer to the work of Bidaut-V\'eron, Garcia-Huidobro and V\'eron \cite{MR3261111}. Besides they in \cite{MR4150912} and \cite{MR4043662} study the equation:
	\begin{equation}
		\Delta v + N v^p + M |\nabla v|^{q}=0,
	\end{equation} 
	and some Liouville type results are obtained.
	 In \cite{MR4234084} and  \cite{MR3959864} , they study the equation \eqref{intro_equ1} and obtain the following results:
	\begin{theorem}[Theorem C in \cite{MR3959864} ]\label{8.24G}
		Assume that $p \geq 0, p + q > 1, 0 \leq q < 2$ and define the polynomial $G$ by
		\begin{equation}\label{8.23G}
			\mathbb G(p, q)= \Big[ (n - 1)^2q + n - 2\Big] p^2 + b(q)p - nq^2,
		\end{equation}
		where 
		\begin{equation}
			b(q) = n(n - 1)q^2 - (n^2 +n - 1)q - n - 2.
		\end{equation}
		If the couple $(p, q)$ satisfies the inequality $\mathbb G(p, q) < 0$, then all the positive solutions of \eqref{intro_equ1} in $\mathbb R^n$ are constant.
	\end{theorem}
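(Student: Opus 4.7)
The abstract indicates that the proof combines a differential identity with Young's inequality, so the plan is a weighted Bernstein/Bochner argument integrated against a cutoff function. Writing the equation as $-\Delta v=v^p|\nabla v|^q$, I would introduce an auxiliary quantity such as $P=v^{-2a}|\nabla v|^2$ with a free exponent $a$ to be optimised. Applying Bochner's formula
\[
\tfrac12\Delta|\nabla v|^2 = |\nabla^2 v|^2 + \nabla v\cdot\nabla\Delta v,
\]
together with the refined Kato inequality $|\nabla^2 v|^2 \geq \tfrac{(\Delta v)^2}{n} + \tfrac{1}{n-1}\bigl|\nabla|\nabla v|\bigr|^2$ (which is the natural source of the $(n-1)^2$ and the $n$ in $\mathbb G(p,q)$), and substituting the equation for $\Delta v$ and $\nabla\Delta v$, one obtains a pointwise differential identity of the schematic form
\[
\tfrac12\Delta P \;\geq\; A\,v^{2p-2}|\nabla v|^{2q+2} + B\,v^{p-1}|\nabla v|^{q}\,\nabla v\cdot\nabla P + (\text{terms quadratic in }\nabla P),
\]
with $A,B$ polynomial in $a,p,q,n$.

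Next I would multiply by $v^{b}\eta^{2k}$, where $\eta$ is a standard cutoff equal to $1$ on $B_R$ and supported in $B_{2R}$, integrate over $\mathbb R^n$, and integrate by parts to move every derivative of $v$ onto $\eta$. The outcome is an integral identity with a single \emph{good} term proportional to $\int \eta^{2k}\,v^{b+2p-2}|\nabla v|^{2q+2}$ on the left, plus several mixed \emph{boundary} terms of the form $\int \eta^{2k-1}\,v^{\alpha}|\nabla v|^{\beta}|\nabla\eta|$. Applying Young's inequality to each mixed term with conjugate exponents chosen so that one factor reproduces the good monomial absorbs a small multiple of the good term into the left-hand side and leaves a remainder bounded by $CR^{-\kappa}\int_{B_{2R}\setminus B_R}v^{b+2p-2}|\nabla v|^{2q+2}$. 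After optimising over $a,b,k$ and the Young conjugates, the net coefficient in front of the good integral should factor as a positive multiple of $-\mathbb G(p,q)$, and the hypothesis $\mathbb G(p,q)<0$ then makes it strictly positive. Letting $R\to\infty$ kills the remainder, forcing $|\nabla v|\equiv0$ and hence $v$ constant.

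The principal obstacle is the parameter bookkeeping: one has to choose $a$, $b$, $k$ and the Young conjugates simultaneously so that (i) every mixed term Young-splits cleanly into the unique good monomial, and (ii) the resulting coefficient is exactly a positive multiple of $-\mathbb G(p,q)$ rather than some strictly larger polynomial. The improvement over Theorem~\ref{8.24G} in \cite{MR3959864} should come precisely from a single sharp application of Young's inequality with carefully tuned weights, replacing the iterated Cauchy--Schwarz/Hölder estimates used previously; any sub-optimal choice at any of the above steps enlarges the forbidden region for $(p,q)$ and loses the improvement. A secondary technical point is verifying that the annular remainder indeed vanishes as $R\to\infty$, which may require first bootstrapping a crude integral bound on $v$ and $|\nabla v|$ from the same inequality applied on concentric annuli.
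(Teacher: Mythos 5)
First, a point of framing: the paper does not prove this statement at all. Theorem \ref{8.24G} is quoted verbatim as Theorem C of \cite{MR3959864} and serves only as a benchmark; the paper's own arguments (Sections 2--5) establish its strictly stronger new theorems by a refinement of exactly the strategy you describe --- a pointwise differential identity, multiplication by a cutoff, integration by parts, and Young's inequality. So at the level of strategy your outline is the right family of arguments, and it matches both the original proof in \cite{MR3959864} and the present paper's machinery.

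As a proof, however, the proposal has genuine gaps. The ``refined Kato inequality'' you invoke, $|\nabla^2 v|^2 \geq \frac{(\Delta v)^2}{n} + \frac{1}{n-1}\bigl|\nabla|\nabla v|\bigr|^2$, is false for general functions: for $v=|x|^2/2$ the left side is $n$ while the right side is $n+\frac{1}{n-1}$. The correct pointwise decomposition must keep the gradient direction separate from the trace --- this is precisely what the paper does by writing $G_{11}=v_{11}-S\Delta v$ and $G_{ij}=v_{ij}-Q\delta_{ij}\Delta v$ with two independent parameters satisfying $(n-1)Q+S=1$ and then using $\sum_{i>1}G_{ii}^2\ge\frac{1}{n-1}G_{11}^2$ --- and getting these constants wrong changes the final polynomial in $(p,q)$, which is the entire content of the theorem. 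More fundamentally, the quantitative heart of the statement is deferred rather than carried out: you assert that the surviving coefficient ``should factor as a positive multiple of $-\mathbb G(p,q)$'' and that the annular remainder can be absorbed, but these are exactly the computations that constitute the proof. In the paper's version of this step the remainder $C\int v^{\alpha}|\nabla v|^{\gamma+2}\eta^{\delta-2}|\nabla\eta|^2$ is split by a three-exponent H\"older/Young inequality, and one must verify $\frac{1}{p_1}+\frac{1}{q_1}>1-\frac{2}{n}$ so that the leftover $CR^{n-2\sigma_1}$ vanishes as $R\to\infty$; this verification occupies most of Section 5. Finally, you do not address the degeneracy of the weight $|\nabla v|^{\gamma}$ with $\gamma<0$ on the critical set $\{\nabla v=0\}$, which forces the integration to be restricted to $\Gamma=\{|\nabla v|>0\}$ together with a check that the boundary integrals \eqref{int_condtion} vanish. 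As it stands the proposal is a plausible plan, not a proof.
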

	Besides, they also get the sufficient and necessary condition for the existence of radial positive solutions:
	\begin{theorem}[Theorem D in \cite{MR3959864}]
		There exist nonconstant radial positive solutions of \eqref{intro_equ1} in $\mathbb R^n$ if and only if $p \geq 0$, $0\leq q < 1$, and 
		\begin{equation}\label{intro_equ2}
			p(n - 2) + q(n - 1) \geq n + \frac{2 - q}{1 - q}.
		\end{equation}
		If equality holds in \eqref{intro_equ2}, then there exists an explicit 1-parameter family of positive radial solutions of \eqref{intro_equ1} in $\mathbb R^n$ under the form 
		\begin{equation}
			v_c(r) = c\Big[ Kc^{\frac{(2 - q)^2}{(n - 2)(1 - q)}} + r^{\frac{2 - q}{1 - q}}\Big]^{-\frac{(n - 2)(1 - q)}{2 - q}},
		\end{equation}
		for any $c > 0$ and some $K = K(n, q) > 0$.
	\end{theorem}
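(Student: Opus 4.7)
The plan is to separate the two claims: the existence criterion \eqref{intro_equ2} and the explicit family at equality. Both rely on reducing to the radial ODE
\[
v''+\frac{n-1}{r}v'+v^{p}(-v')^{q}=0
\]
(with $v'\leq 0$ by regularity at the origin and positivity) and analyzing it through the scale-invariant Emden--Fowler change of variables suggested by the symmetry $v(r)\mapsto\lambda^{\alpha}v(\lambda r)$, $\alpha=\frac{2-q}{p+q-1}$. One introduces $X(t)=-rv'(r)/v(r)$ together with a conjugate scale-invariant quantity $Y(t)$ and the logarithmic time $t=\ln r$; an appropriate normalization turns the equation into an autonomous polynomial planar system. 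Global positive solutions correspond to trajectories defined for all $t\in\mathbb R$ and contained in the admissible quadrant, with prescribed behavior at $t=-\infty$ (smoothness at the origin) and $t=+\infty$ (positivity/decay).

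A direct computation of the nontrivial equilibrium $M$ of this system shows that its eigenvalue structure changes sign exactly at the threshold $p(n-2)+q(n-1)=n+(2-q)/(1-q)$, which is precisely \eqref{intro_equ2} with equality. For $0\leq q<1$ with strict inequality, I would construct a heteroclinic orbit from the regular-origin equilibrium to $M$ by a shooting argument inside a suitable invariant region, yielding a nonconstant positive radial solution; the case $q\geq 1$ is excluded because no admissible orbit is globally defined in $t$, which is consistent with the Lions rigidity at $p=0$. For necessity (reversed inequality), one must exhibit a Lyapunov/monotonicity functional---essentially an integrating factor of the autonomous system restricted to the critical level set---whose derivative along trajectories has sign given by the quantity $p(n-2)+q(n-1)-n-(2-q)/(1-q)$, forcing every admissible orbit to leave the positive quadrant. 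This Lyapunov step is the main obstacle, since the correct invariant has to be sharp enough to detect the threshold exactly.

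For the explicit family at equality, the natural ansatz is $v(r)=c(Kc^{\sigma}+r^{\beta})^{-\gamma}$. Asymptotic matching with the fundamental solution of $-\Delta$ at the critical decay rate forces $\beta\gamma=n-2$; cancellation of the nonlinear term in the reduced ODE determines $\beta=\frac{2-q}{1-q}$ and hence $\gamma=\frac{(n-2)(1-q)}{2-q}$; the scaling invariance acting on the parameter $c$ fixes $\sigma=\frac{(2-q)^{2}}{(n-2)(1-q)}$. Substitution of the ansatz into \eqref{intro_equ1} then collapses to a single algebraic identity in $K$, whose positive root gives $K=K(n,q)>0$. This substitution step is lengthy but routine; the conceptual content of the theorem lies entirely in the phase-plane analysis of the preceding paragraph.
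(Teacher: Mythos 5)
This statement is Theorem~D of Bidaut-V\'eron, Garc\'ia-Huidobro and V\'eron, which the present paper only \emph{quotes} from \cite{MR3959864}; there is no proof of it anywhere in this paper, so there is nothing internal to compare your argument against. Your outline is broadly consistent with how results of this type are established in the cited literature (reduction to the radial ODE, the scale invariance with exponent $\alpha=\tfrac{2-q}{p+q-1}$, an autonomous planar system in logarithmic time, and identification of the threshold with a change in the equilibrium structure), and your consistency checks are sound: the exponents $\beta=\tfrac{2-q}{1-q}$, $\gamma=\tfrac{(n-2)(1-q)}{2-q}$, $\beta\gamma=n-2$ and $\sigma=\tfrac{(2-q)^2}{(n-2)(1-q)}$ do match the scaling group exactly at equality in \eqref{intro_equ2}.

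However, as a proof the proposal has genuine gaps, and you name the biggest one yourself. The necessity direction (nonexistence when the reversed strict inequality holds, and for all $q\ge 1$) is reduced to the existence of a Lyapunov or monotonicity functional ``whose derivative along trajectories has sign given by'' the threshold quantity, but no such functional is exhibited; since detecting the threshold \emph{exactly} (note that the cruder condition $p(n-2)+q(n-1)>n$ is what mere integrability of the source at infinity gives, not the extra $\tfrac{2-q}{1-q}$) is precisely the analytic content of the theorem, this is the proof, not a detail of it. The sufficiency direction is likewise only asserted: the invariant region for the shooting argument and the verification that the regular trajectory stays in the admissible quadrant for all $t$ are not supplied, and the exclusion of $1\le q<2$ (``no admissible orbit is globally defined'') is stated without argument --- the appeal to Lions' result concerns only $p=0$ and does not cover $p>0$. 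Finally, the ``routine substitution'' producing $K(n,q)>0$ is not carried out. In short, the architecture is right, but each of the three load-bearing steps (sharp Lyapunov functional, shooting/invariant region, exclusion of $q\ge1$) is missing, so this cannot be accepted as a proof; if you only need the result, cite \cite{MR3959864} as the paper does.
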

	The natural problem is that when $\mathbb G(p, q)\geq 0$ and
	\begin{equation}
			p(n - 2) + q(n - 1) < n + \frac{2 - q}{1 - q},
	\end{equation}
	whether the equation \eqref{intro_equ1} has nontrivial positive solutions?
	This is the result of this paper, and we answer the question partly. The first result is optimal when $0 \leq q \leq \frac{1}{n - 1}$.
	
	\begin{theorem}
		Assume that $p \geq 0, p + q > 1, 0 \leq q \leq \frac{1}{n - 1}$ and 
		\begin{equation}
			p(n - 2) + q(n - 1) < n + \frac{2 - q}{1 - q},
		\end{equation}
		then all the positive solutions of \eqref{intro_equ1} in $\mathbb R^n$ are constant.
	\end{theorem}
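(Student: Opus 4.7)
The plan is to prove the theorem by establishing a pointwise differential inequality which, after an integration-by-parts argument against a cutoff test function and a careful application of Young's inequality, yields a universal integral bound that forces $|\nabla v|\equiv 0$. This follows the general pattern of the proof of Theorem~\ref{8.24G}, but with a refined algebraic step that exploits the constraint $q\leq 1/(n-1)$.

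Concretely, I would start by introducing an auxiliary function $u=v^{\gamma}$ with $\gamma$ a free exponent to be chosen at the end, and combine the Bochner formula
$$ \tfrac{1}{2}\Delta|\nabla u|^{2}=|D^{2}u|^{2}+\langle\nabla u,\nabla\Delta u\rangle $$
with the PDE satisfied by $u$ to obtain a pointwise identity. I would then bound $|D^{2}u|^{2}$ from below by a sharp Kato-type inequality that isolates the radial part of $D^{2}u$ with the constant $n/(n-1)$; it is this refinement that selects the exponent $1/(n-1)$. The outcome is a pointwise differential inequality in $u$, $|\nabla u|$, and $|\nabla|\nabla u||$.

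Next, I would multiply this pointwise inequality by a test function of the form $v^{s}|\nabla v|^{t}\eta^{2m}$, where $\eta\in C_{c}^{\infty}(B_{2R})$ is a standard radial cutoff satisfying $|\nabla\eta|\leq C/R$, and integrate by parts over $\mathbb R^{n}$. Careful regrouping produces a quadratic form $\mathcal Q(s,t,\gamma)$ multiplying the main integral $\int v^{p-1}|\nabla v|^{q+2}\eta^{2m}$, together with Young-type cross terms and a cutoff remainder of order $R^{-2}$. Young's inequality, with exponents tuned to $(n,p,q)$, absorbs the cross terms into the main one.

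The key difficulty, and the heart of the argument, is the algebraic optimisation: one must verify that $s$, $t$, $\gamma$, $m$ can be chosen simultaneously so that $\mathcal Q(s,t,\gamma)>0$ on the whole region $p(n-2)+q(n-1)<n+(2-q)/(1-q)$ under the assumption $q\leq 1/(n-1)$, and that the cutoff remainder still decays as $R\to\infty$. Once this parameter search is closed off, letting $R\to\infty$ forces $\int_{\mathbb R^{n}}v^{p-1}|\nabla v|^{q+2}=0$; since $v>0$, this gives $\nabla v\equiv 0$, so $v$ is constant. The conclusion is optimal on this range of $q$ precisely because the Kato refinement used is sharp.
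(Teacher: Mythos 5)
Your outline reproduces the general Gidas--Spruck scheme, but it leaves the entire substantive content of the theorem unproved: the statement ``one must verify that $s$, $t$, $\gamma$, $m$ can be chosen simultaneously so that $\mathcal Q(s,t,\gamma)>0$ on the whole region'' \emph{is} the theorem, and you do not carry out that verification. Worse, the specific scheme you propose (Bochner formula for $u=v^{\gamma}$ plus the standard Kato refinement $|D^{2}u|^{2}\geq \frac{n}{n-1}$ times the square of the radial second derivative, then a test function $v^{s}|\nabla v|^{t}\eta^{2m}$) is exactly the classical machinery that yields Theorem~\ref{8.24G}, i.e.\ the condition $\mathbb G(p,q)<0$, which is strictly weaker than the claimed optimal range $p(n-2)+q(n-1)<n+\frac{2-q}{1-q}$. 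The constant $n/(n-1)$ is not the new ingredient; it is already implicit in the earlier work. What actually closes the gap in the paper is the anisotropic decomposition \eqref{1.1a}, in which the trace of the Hessian is split unevenly between the gradient direction and the orthogonal directions via \emph{two independent} parameters $S$ and $Q$ with $(n-1)Q+S=1$, chosen as in \eqref{8.23S}--\eqref{8.23Q} precisely so that the quadratic form degenerates on the explicit extremal radial solutions $v_c$. Without this extra degree of freedom (or an equivalent one), the parameter search you defer to cannot succeed on the full region, so the proposal as written would stall at the weaker conclusion.

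There are also two technical points you pass over that the paper must handle. First, the exponent $\gamma$ used in the optimal choice is negative (it is a small perturbation of $(n-1)q^{2}-(n+1)q$), so the integration by parts cannot be performed over all of $\mathbb R^{n}$; it must be restricted to $\Gamma=\{|\nabla v|>0\}$, and one must check that the boundary integrals in \eqref{int_condtion} vanish, which requires $\gamma+q+1>0$ and similar sign conditions. Second, the cutoff remainder is not simply ``of order $R^{-2}$'': it is an integral of $v^{\alpha}|\nabla v|^{\gamma+2}$ against $|\nabla\eta|^{2}$, and absorbing it requires a three-exponent Young inequality whose admissibility ($1-\frac{2}{n}<\frac{1}{p_1}+\frac{1}{q_1}<1$ with $p_1,q_1>0$) is itself a nontrivial constraint on $(p,q,\alpha,\gamma)$ that occupies a substantial part of the paper's Section~5. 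Neither issue is fatal in principle, but both must be addressed for the argument to close.
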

	The second result is better than Theorem \ref{8.24G} .
	\begin{theorem}
		Define
		\begin{equation}
			\mathbb H(p, q):= p^2 + \Big[\frac{n - 1}{n - 2}q - \frac{n^2 - 3}{(n - 2)^2}\Big]p + \frac{1 - (n - 1)q}{(n - 2)^2}.
		\end{equation}
		Assume that $p \geq 0, p + q > 1, \mathbb H(p, q) < 0$ and suppose $ \frac{1}{n - 1} < q < 1$ when $n = 3$ and  $\frac{1}{n - 1} < q < 2$ when $n \geq 4$,
		then all the positive solutions of \eqref{intro_equ1} in $\mathbb R^n$ are constant.
	\end{theorem}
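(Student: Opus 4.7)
The plan is to follow the Bochner plus integration-by-parts scheme of~\cite{MR3959864}, but with a sharper exploitation of the Hessian decomposition along and perpendicular to $\nabla v$; this refinement is precisely what should convert the obstruction polynomial $\mathbb G$ into $\mathbb H$.

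\textbf{Step 1: differential identity.} I would work on $\{v>0\}$ (the case where $v$ vanishes being handled by the strong maximum principle). Starting from the Bochner formula
$$\tfrac{1}{2}\Delta|\nabla v|^2 \;=\; |D^2 v|^2 + \langle \nabla v,\nabla\Delta v\rangle$$
and substituting $\Delta v=-v^p|\nabla v|^q$, I would compute
$$\langle \nabla v,\nabla\Delta v\rangle \;=\; -p\,v^{p-1}|\nabla v|^{q+2}\;-\;\tfrac{q}{2}\,v^p|\nabla v|^{q-2}\langle\nabla v,\nabla|\nabla v|^2\rangle.$$
With $A=\langle D^2v\,\nabla v,\nabla v\rangle/|\nabla v|^2$, the elementary pointwise bound
$$|D^2 v|^2 \;\ge\; A^2 + \tfrac{1}{n-1}(\Delta v - A)^2,$$
obtained by Cauchy--Schwarz on the tangential $(n-1)\times(n-1)$ block of $D^2v$, then converts the Bochner identity into a pointwise inequality in $v$, $|\nabla v|$, $A$, and $\Delta v$ alone. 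This is the differential identity advertised in the abstract.

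\textbf{Step 2: test function and Young's inequality.} Next I would multiply the pointwise inequality by a weight $v^{\alpha}|\nabla v|^{\beta}\eta^{2k}$, with $\eta\in C^\infty_c(B_{2R})$, $\eta\equiv 1$ on $B_R$ and $|\nabla\eta|\le C/R$, and integrate over $\mathbb R^n$. After repeatedly integrating by parts to push derivatives off $D^2v$, the estimate reduces to a quadratic form in two principal nonnegative integrals (morally, $\int v^{\alpha-2}|\nabla v|^{\beta+2}\eta^{2k}$ and $\int v^{\alpha}|\nabla v|^{\beta-2}|\nabla|\nabla v|^2|^2\eta^{2k}$) with coefficients rational in $p,q,\alpha,\beta,k$, plus a remainder $\mathcal R(R)$ supported on $B_{2R}\setminus B_R$ of order $R^{-2}$ times lower-order integrals. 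Applying Young's inequality with a free parameter $\varepsilon$ to the cross term and then choosing $\alpha,\beta,k,\varepsilon$ to minimize the effective coefficient of the main integral, I expect the optimum to equal a positive multiple of $-\mathbb H(p,q)$, so that $\mathbb H(p,q)<0$ yields a strictly positive effective coefficient. Sending $R\to\infty$ and bounding $\mathcal R(R)$ by H\"older then forces the main integral to vanish, whence $|\nabla v|\equiv 0$.

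\textbf{Main obstacle.} The real work lies in the algebraic optimization: there are three or four free parameters, and producing exactly the polynomial $\mathbb H(p,q)$ from the quadratic form requires solving a small polynomial system and verifying sign conditions on the resulting coefficients. The side constraints $q>\tfrac{1}{n-1}$ and $q<1$ (for $n=3$) or $q<2$ (for $n\ge 4$) should emerge as precisely the admissibility range in which the Young exponent lies in $(1,\infty)$ and the denominators arising during the optimization stay positive; this is why the present argument trades the restriction on $q$ against an improved obstruction polynomial, and why the previous theorem remains optimal in the complementary range $0\le q\le \tfrac{1}{n-1}$. A minor technical point is regularizing $|\nabla v|$ by $(|\nabla v|^2+\delta)^{1/2}$ and sending $\delta\downarrow 0$ to justify the integration by parts near $\{\nabla v=0\}$.
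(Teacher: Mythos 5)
Your outline is the same general scheme as the paper's (the paper's ``differential identity'' is an integrated Bochner-type identity obtained by multiplying the equation by $v^{\alpha}|\nabla v|^{\gamma}\Delta v$, and for the range $q>\tfrac{1}{n-1}$ it chooses its parameters $S=0$, $Q=\tfrac{1}{n-1}$, which reproduces exactly your splitting $|D^2v|^2\ge A^2+\tfrac{1}{n-1}(\Delta v-A)^2$ along and transverse to $\nabla v$). But as written the proposal has two genuine gaps, and they are not peripheral: they are where the theorem actually lives.

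First, the entire derivation of $\mathbb H$ is deferred to ``I expect the optimum to equal a positive multiple of $-\mathbb H(p,q)$.'' In the paper this step is not a generic optimization: one must (i) fix $\gamma=(n-2)q-2$ so that the coefficient $b_2$ of the dangerous cross term $v^{\alpha}|\nabla v|^{\gamma}v_{11}\Delta v$ vanishes identically (this is forced, not optimized, and it is exactly why this case requires $q>\tfrac{1}{n-1}$, since one also needs $\gamma>-\tfrac{n}{n-1}$); (ii) introduce an additional free multiple $P\le 0$ of the identity $(\Delta v)^2=v^{2p}|\nabla v|^{2q}$, which shifts $p$ to $p-P$ in all coefficients; and (iii) optimize only over $\alpha$, the discriminant of the resulting quadratic being $-16\bigl(\text{positive}\bigr)\cdot\mathbb H(p-P,q)$. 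Without carrying out this algebra you have no argument that the obstruction is $\mathbb H$ rather than $\mathbb G$ or something else.

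Second, your closing step cannot work with only the two ``principal integrals'' you name. The cutoff error is essentially $\int v^{\alpha}|\nabla v|^{\gamma+2}\eta^{\delta-2}|\nabla\eta|^2$, and since $\alpha>0$ here there is no way to absorb it using $\int v^{\alpha-2}|\nabla v|^{\gamma+4}\eta^{\delta}$ alone (Young's inequality leaves a term with a higher power of $v$ and no decay). The paper must retain a third coercive term $\int v^{\alpha+2p}|\nabla v|^{\gamma+2q}\eta^{\delta}$ (generated precisely by the parameter $P$ above) and interpolate the error by a three-exponent H\"older inequality with exponents $p_1,q_1,\sigma_1$, the condition $\sigma_1>n/2$, i.e.\ $1-\tfrac2n<\tfrac1{p_1}+\tfrac1{q_1}<1$, being what makes $R^{\,n-2\sigma_1}\to0$. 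Verifying $p_1,q_1>0$ and these exponent inequalities is where the hypotheses $p+q>1$ and the stated ranges of $q$ (including the asymmetry between $n=3$ and $n\ge4$) actually enter, and it requires several nontrivial sub-claims in the paper (monotonicity of $B(p)$, positivity of $\alpha_2+\gamma+2$ via a polynomial factorization, and a separate sign analysis for $1<q<2$, $n\ge4$). Your remark that the constraints ``should emerge as the admissibility range'' is the correct intuition, but none of it is established; also note the paper works on $\Gamma=\{|\nabla v|>0\}$ and checks the boundary integrals vanish using $\gamma+q+1>0$, which is an acceptable alternative to your $\delta$-regularization but again must be checked against the sign of $\gamma$.
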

	By direct computation, we know
	\begin{lemma}\label{compare}
		If $p \geq 0, p + q > 1, \frac{1}{n - 1} < q < 2$ and $\mathbb G(p, q) < 0$, then
		\begin{equation*}
			0 > p^2 + \Big[\frac{n - 1}{n - 2}q - \frac{n^2 - 3}{(n - 2)^2}\Big]p + \frac{1 - (n - 1)q}{(n - 2)^2}.
		\end{equation*}
	\end{lemma}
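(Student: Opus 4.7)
Both $\mathbb{G}(\cdot, q)$ and $\mathbb{H}(\cdot, q)$ are upward-opening quadratic polynomials in $p$ with positive leading coefficients $A_G(q) := (n-1)^2 q + (n-2)$ and $1$ respectively. Under the hypothesis $q > \frac{1}{n-1}$, their values at $p = 0$, namely $-nq^2$ and $\frac{1 - (n-1)q}{(n-2)^2}$, are both strictly negative, so each admits a unique positive root, which I denote by $p_G^+(q)$ and $p_H^+(q)$. Consequently $\{p \geq 0 : \mathbb{G}(p,q) < 0\} = [0, p_G^+(q))$, and analogously for $\mathbb{H}$, so the lemma is equivalent to the one-variable inequality $p_G^+(q) \leq p_H^+(q)$ for $q \in \bigl(\tfrac{1}{n-1}, 2\bigr)$.

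To circumvent radicals I would form
\begin{equation*}
L(p,q) := A_G(q)\,\mathbb{H}(p,q) - \mathbb{G}(p,q) = \alpha(q)\,p + \beta(q),
\end{equation*}
which is linear in $p$ because the quadratic terms cancel. Since $A_G(q) > 0$ and $\mathbb{G}(p_G^+(q), q) = 0$, the sign of $\mathbb{H}(p_G^+(q), q)$ coincides with that of $L(p_G^+(q), q)$; so it suffices to establish the one-variable inequality $L(p_G^+(q), q) \leq 0$. A short preliminary calculation yields
\begin{equation*}
(n-2)^2 \alpha(q) = (n-1)(n-2)q^2 - (4n^2 - 10n + 5)q - (n-2),
\end{equation*}
which is a convex parabola in $q$ taking negative values at $q=0$ and $q=2$; hence $\alpha(q) < 0$ throughout $[0,2]$. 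The desired inequality is therefore automatic when $\beta(q) \leq 0$, and when $\beta(q) > 0$ is equivalent (since $p_G^+(q)$ is the larger root of an upward-opening parabola) to the condition
\begin{equation*}
\mathbb{G}\!\left(\frac{\beta(q)}{-\alpha(q)},\; q\right) \leq 0,
\end{equation*}
which, after clearing the common denominator $(n-2)^4$, reduces to a polynomial inequality in $q$ alone with coefficients that are polynomials in $n$.

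The main obstacle is the verification of this final polynomial inequality. While every conceptual step above is straightforward, the resulting polynomial in $q$ has moderate degree and its coefficients involve various powers of $n$, so the computation demands patient bookkeeping and careful collection of terms. I do not anticipate any further analytic input beyond the algebraic manipulation sketched here; identifying the cleanest grouping of terms---presumably isolating a factor that vanishes at one of the natural boundary values and exhibits the residual cofactor as a manifestly nonnegative expression on $(\tfrac{1}{n-1}, 2)$---is the principal challenge.
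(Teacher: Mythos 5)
Your reduction to the linear form $L(p,q)=A_G(q)\,\mathbb H(p,q)-\mathbb G(p,q)=\alpha(q)p+\beta(q)$ and your computation of $\alpha$ are correct and coincide exactly with the paper's first step: the paper shows $-(n-1)(n-2)q^2+(4n^2-10n+5)q+(n-2)>0$ on $[0,2)$, which is precisely $-(n-2)^2\alpha(q)>0$. But the argument is not finished. The decisive step of your plan --- verifying $\mathbb G\bigl(\beta(q)/(-\alpha(q)),q\bigr)\le 0$ on the set where $\beta(q)>0$ --- is exactly the polynomial inequality you defer as ``the principal challenge,'' and it is not a routine formality: the case $\beta(q)>0$ genuinely occurs (e.g.\ $\beta>0$ for $q$ near $\tfrac1{n-1}$, since there $L(0,q)=\beta=A_G\cdot\frac{1-(n-1)q}{(n-2)^2}+nq^2>0$), and clearing denominators produces a quartic-or-worse inequality in $q$ with $n$-dependent coefficients for which you offer no factorization or sign analysis. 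As it stands the proof has a hole precisely where the work is.

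There is also a structural reason the computation comes out hard on your route: you discard the hypothesis $p+q>1$ by reformulating the lemma as the root comparison $p_G^+\le p_H^+$ (which, note, is sufficient for the lemma but not equivalent to it, since the lemma only constrains $p>\max(0,1-q)$). The paper keeps that hypothesis and uses it at exactly this point: since $\alpha<0$, $L(\cdot,q)$ is decreasing in $p$, so on the admissible range it suffices to check $L\bigl(\max(0,1-q),q\bigr)\le 0$. For $q\ge 1$ this is just the sign of $\beta(q)$, which one checks directly is negative there; for $\tfrac1{n-1}<q<1$ the evaluation at $p=1-q$ collapses, after clearing denominators, to $(n-1)(n-2)q(q-2)^2>0$, which is immediate. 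Replacing your evaluation at the rational point $\beta/(-\alpha)$ by this endpoint evaluation at $p=\max(0,1-q)$ closes the gap with a short computation.
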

	
	Define: $f1: p + q = 1;$ \quad \quad $f2:	p(n - 2) + q(n - 1) = n + \frac{2 - q}{1 - q};$\\$f3: \mathbb{G}(p, q) = 0;$\quad \quad$f4: \mathbb H(p,q) = 0, q > \frac{1}{n - 1};$ \quad \quad $f5: q = 1.$
	\begin{figure}[H]
		\centering
		\includegraphics[width=15cm, height=8cm]{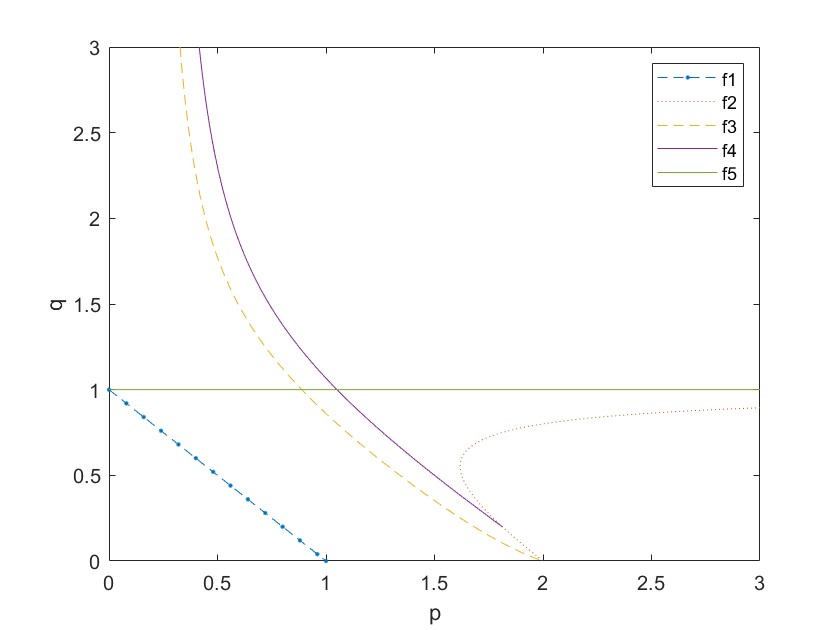}
		\caption{Graphs for $n = 6$.}		
	\end{figure}
	
	 One new idea in this paper is that we introduce the new parameter $S,Q$ in (\ref{1.1a}). Before our work one always let $S=Q=\frac{1}{n}$ as in Gidas and Spruck \cite{MR615628}. This idea obtains one more freedom to get the positive term in (\ref{sec2_equ1}).

	The proof of these two theorems is based on a differential identity and Young inequality. In section \ref{1}, we get the differential identity. In section \ref{conditions}, we deduce the differential inequality under some conditions. In section \ref{Young}, we show how to use Young inequality to prove that the solutions are constant. Finally in section 5, we obtain the conditions on $(p, q)$ and give the proof of the theorems and Lemma \ref{compare}.

		{\it Acknowledgement.} The authors thank Prof. Bidaut-V\'eron 
	and  V\'eron for their very useful comments on this paper.
	The authors was supported by  National Natural Science Foundation of China (grants 12141105) and National Key Research and Development Project (grants SQ2020YFA070080).

	\section{Differential identity}\label{1}
	Consider the equation:
	\begin{equation}\label{section1_equ1}
		\Delta v + v^p|\nabla v|^q = 0.
	\end{equation}
Multiplying \eqref{1} by $v^{\alpha}|\nabla v|^{\gamma}\Delta v$,
\begin{equation}\label{2}
		\begin{aligned}
			   &v^\alpha|\nabla v|^{\gamma}(\Delta v)^2  =  -  v^{\alpha + p}|\nabla v| ^{\gamma + q}\Delta v.
		\end{aligned}
	\end{equation}
	We observed that the term $v^\alpha|\nabla v|^{\gamma}(\Delta v)^2 $ is important. 
	In the following contents, we always suppose that $v_1(x_0) = |\nabla v|(x_0)$ and define
	\begin{equation}\label{1.1a}
		\begin{aligned}
			&G_{11} = v_{11} - S \Delta v,\\
			&G_{ij} = v_{ij} - Q\delta_{ij}\Delta v, i + j > 2.
		\end{aligned}
	\end{equation}	
	We remark that the definition of $G_{ij}$ is depending on $x_0$. Besides our computations in this section are always at one point $x_0$ with $|\nabla v|(x_0) \neq 0$.
	
	\begin{itemize}
		\item The term $v^{\alpha - 1}|\nabla v|^{\gamma + 2}\Delta v $:

	\begin{equation}\label{section1_equ5}
		\begin{aligned}
			v^{\alpha - 1}|\nabla v|^{\gamma + 2}\Delta v &= (v^{\alpha - 1}|\nabla v|^{\gamma + 2}v_i)_i - (\alpha - 1)v^{\alpha - 2}|\nabla v|^{\gamma +4} - (\gamma + 2)v^{\alpha - 1}|\nabla v|^{\gamma}v_i v_j v_{ij}\\
			&= (v^{\alpha - 1}|\nabla v|^{\gamma + 2}v_i)_i - (\alpha - 1)v^{\alpha - 2}|\nabla v|^{\gamma +4}\\
			& - (\gamma + 2)v^{\alpha - 1}|\nabla v|^{\gamma + 2}(G_{11} + S\Delta v),\\
			\Rightarrow v^{\alpha - 1}|\nabla v|^{\gamma + 2}\Delta v &= \frac{1}{1 + \gamma S + 2S} (v^{\alpha - 1}|\nabla v|^{\gamma + 2}v_i)_i - \frac{\alpha - 1}{1 + \gamma S + 2S}v^{\alpha - 2}|\nabla v|^{\gamma +4}\\
			&- \frac{\gamma + 2}{1 + \gamma S + 2S} v^{\alpha - 1}|\nabla v|^{\gamma + 2} G_{11}.
		\end{aligned}
	\end{equation}

		\item The term $v^\alpha|\nabla v|^{\gamma}(\Delta v)^2 $: in fact, we have
	\begin{align*}
		&v^{\alpha}|\nabla v|^{\gamma}(\Delta v)^2  = v^{\alpha}|\nabla v|^{\gamma}v_{ii}v_{jj}\\
			=& (v^{\alpha}|\nabla v|^\gamma v_{ii}v_j)_j - \alpha v^{\alpha - 1}|\nabla v|^{\gamma + 2}v_{ii} - v^{\alpha}|\nabla v|^{\gamma}_j v_{ii}v_j - v^\alpha |\nabla v|^\gamma v_{iij}v_j,
	\end{align*}
	and
	\begin{equation*}
		\begin{aligned}
			v^\alpha |\nabla v|^\gamma v_{iij}v_j &= (v^\alpha |\nabla v|^{\gamma }v_{ij}v_j)_i - \alpha v^{\alpha - 1}|\nabla v|^\gamma v_i v_j v_{ij} - v^{\alpha}|\nabla v|^\gamma_iv_{ij}v_j - v^{\alpha}|\nabla v|^{\gamma}v_{ij}^2.
		\end{aligned}
	\end{equation*}	
	As a result, we obtain
	\begin{align*}
		 v^{\alpha}|\nabla v|^{\gamma}(\Delta v)^2&= (v^{\alpha}|\nabla v|^\gamma v_{ii}v_j)_j - \alpha v^{\alpha - 1}|\nabla v|^{\gamma + 2}\Delta v - v^{\alpha}|\nabla v|^{\gamma}_j \Delta v\cdot v_j \\
			&- (v^\alpha |\nabla v|^{\gamma }v_{ij}v_j)_i + \alpha v^{\alpha - 1}v_i v_j v_{ij}|\nabla v|^\gamma + v^{\alpha}|\nabla v|^\gamma_iv_{ij}v_j + v^{\alpha}|\nabla v|^{\gamma}v_{ij}^2\\
			&=  (v^{\alpha}|\nabla v|^{\gamma}\Delta v v_i -  v^{\alpha}|\nabla v|^{\gamma}v_{ij}v_j)_i + v^{\alpha}|\nabla v|^{\gamma}v_{ij}^2 - \alpha v^{\alpha - 1}|\nabla v|^{\gamma + 2}\Delta v\\
			& + \alpha v^{\alpha - 1}|\nabla v|^{\gamma}v_i v_j v_{ij} - v^{\alpha}\Delta v\cdot v_j|\nabla v|^{\gamma}_j + v^{\alpha}v_j v_{ij}|\nabla v|^{\gamma}_i.\\
	\end{align*}

	\begin{align*}
		\Rightarrow & v^{\alpha}|\nabla v|^{\gamma}(\Delta v)^2 = \Big( v^{\alpha}|\nabla v|^{\gamma}\Delta v v_i\Big)_i - ( v^{\alpha}|\nabla v|^{\gamma}v_{ij}v_j)_i + \sum_{i + j > 2} v^{\alpha}|\nabla v|^{\gamma}v_{ij}^2\\
			&+ v^{\alpha}|\nabla v|^{\gamma}G_{11}^2 + 2S v^{\alpha}|\nabla v|^{\gamma}G_{11}\Delta v + S^2 v^{\alpha}|\nabla v|^{\gamma}(\Delta v)^2 - \alpha v^{\alpha - 1}|\nabla v|^{\gamma + 2}\Delta v\\
			& + \alpha v^{\alpha - 1}|\nabla v|^{\gamma + 2}v_{11} - \gamma v^{\alpha}|\nabla v|^{\gamma - 2}v_{i} v_{j} v_{ij}\Delta v + \gamma v^{\alpha}|\nabla v|^{\gamma - 2}v_{j}v_{ij}v_{l}v_{il}\\
			&= \Big( v^{\alpha}|\nabla v|^{\gamma}\Delta v v_i\Big)_i - ( v^{\alpha}|\nabla v|^{\gamma}v_{ij}v_j)_i + \sum_{i + j > 2} v^{\alpha}|\nabla v|^{\gamma}v_{ij}^2\\
			&+ v^{\alpha}|\nabla v|^{\gamma}G_{11}^2 + 2S v^{\alpha}|\nabla v|^{\gamma}G_{11}\Delta v + S^2 v^{\alpha}|\nabla v|^{\gamma}(\Delta v)^2 - \alpha v^{\alpha - 1}|\nabla v|^{\gamma + 2}\Delta v \\
			&+ \alpha v^{\alpha - 1}|\nabla v|^{\gamma + 2}v_{11} - \gamma v^{\alpha}|\nabla v|^{\gamma } v_{11}\Delta v + \gamma v^{\alpha}|\nabla v|^{\gamma }v_{i1}^2\\
			 &= \Big( v^{\alpha}|\nabla v|^{\gamma}\Delta v v_i\Big)_i - ( v^{\alpha}|\nabla v|^{\gamma}v_{ij}v_j)_i + \sum_{i + j > 2} v^{\alpha}|\nabla v|^{\gamma}v_{ij}^2\\
			&+ v^{\alpha}|\nabla v|^{\gamma}G_{11}^2 + 2S v^{\alpha}|\nabla v|^{\gamma}G_{11}\Delta v + S^2 v^{\alpha}|\nabla v|^{\gamma}(\Delta v)^2 - \alpha v^{\alpha - 1}|\nabla v|^{\gamma + 2}\Delta v \\
			&+ \alpha v^{\alpha - 1}|\nabla v|^{\gamma + 2}\Big(G_{11} + S\Delta v\Big) - \gamma v^{\alpha}|\nabla v|^{\gamma }\Big(G_{11} + S\Delta v\Big) \Delta v + \gamma v^{\alpha}|\nabla v|^{\gamma }\Big(G_{11} + S\Delta v\Big)^2\\
			&+ \sum_{i > 1} \gamma v^{\alpha}|\nabla v|^{\gamma}G_{1i}^2.
	\end{align*}

\begin{align*}
		\Rightarrow & \Big(1 - S^2 + \gamma S - \gamma S^2\Big) v^{\alpha}|\nabla v|^{\gamma}(\Delta v)^2\\
	&= \Big( v^{\alpha}|\nabla v|^{\gamma}\Delta v v_i\Big)_i - \Big( v^{\alpha}|\nabla v|^{\gamma}v_{ij}v_j\Big)_i + \sum_{i + j > 2} v^{\alpha}|\nabla v|^{\gamma}v_{ij}^2\\
	&+ \Big(\alpha S - \alpha\Big)v^{\alpha - 1}|\nabla v|^{\gamma + 2}\Delta v + \alpha v^{\alpha - 1}|\nabla v|^{\gamma + 2}G_{11}\\
	&+ \Big(2\gamma S + 2S - \gamma\Big) v^{\alpha} |\nabla v|^{\gamma}G_{11}\Delta v + \Big(1 + \gamma\Big) v^{\gamma}|\nabla v|^{\gamma}G_{11}^2 + \sum_{i > 1} \gamma v^{\alpha}|\nabla v|^{\gamma }G_{i1}^2 \\
	&= \Big( v^{\alpha}|\nabla v|^{\gamma}\Delta v v_i\Big)_i - \Big( v^{\alpha}|\nabla v|^{\gamma}v_{ij}v_j\Big)_i + \sum_{i + j > 2} v^{\alpha}|\nabla v|^{\gamma}v_{ij}^2\\
	&+ \Big(\alpha S - \alpha\Big)\cdot \Bigg[ \frac{1}{1 + \gamma S + 2S} (v^{\alpha - 1}|\nabla v|^{\gamma + 2}v_i)_i - \frac{\alpha - 1}{1 + \gamma S + 2S}v^{\alpha - 2}|\nabla v|^{\gamma +4}\\
	&- \frac{\gamma + 2}{1 + \gamma S + 2S} v^{\alpha - 1}|\nabla v|^{\gamma + 2} G_{11}\Bigg] + \alpha v^{\alpha - 1}|\nabla v|^{\gamma + 2}G_{11}\\
	&+ \Big(2\gamma S + 2S - \gamma\Big) v^{\alpha} |\nabla v|^{\gamma}G_{11}\Delta v + \Big(1 + \gamma\Big) v^{\alpha}|\nabla v|^{\gamma}G_{11}^2 + \sum_{i > 1} \gamma v^{\alpha}|\nabla v|^{\gamma }G_{i1}^2. \\
\end{align*}

	\begin{equation}\label{equ111}
		\begin{aligned}
			\Rightarrow & \Big(1 - S^2 + \gamma S - \gamma S^2\Big)v^{\alpha}|\nabla v|^{\gamma}(\Delta v)^2\\
			&= \Big( v^{\alpha}|\nabla v|^{\gamma}\Delta v v_i\Big)_i - \Big( v^{\alpha}|\nabla v|^{\gamma}v_{ij}v_j\Big)_i + \frac{\alpha S - \alpha}{1 + \gamma S + 2S} \Big(v^{\alpha - 1}|\nabla v|^{\gamma + 2}v_i\Big)_i\\
			&+ \sum_{i + j > 2} v^{\alpha}|\nabla v|^{\gamma}v_{ij}^2  + \frac{\alpha(\alpha - 1)(1 - S)}{1 + \gamma S + 2S}v^{\alpha - 2}|\nabla v|^{\gamma +4}+  \frac{\alpha(\gamma + 3)}{1 + \gamma S + 2S} v^{\alpha - 1}|\nabla v|^{\gamma + 2} G_{11}\\
			&+ \Big(2\gamma S - \gamma + 2S\Big) v^{\alpha} |\nabla v|^{\gamma}G_{11}\Delta v + (1 + \gamma) v^{\alpha}|\nabla v|^{\gamma}G_{11}^2 + \sum_{i > 1} \gamma v^{\alpha}|\nabla v|^{\gamma }G_{i1}^2. \\
		\end{aligned}
	\end{equation}
	Recall that $G_{ij} := v_{ij} - Q\delta_{ij}\Delta v  , i + j > 2$, then
	\begin{align*}
		\sum_{i > 1}v^{\alpha}|\nabla v|^{\gamma}v_{ii}^2 &= \sum_{i > 1}v^{\alpha}|\nabla v|^{\gamma}\Big(G_{ii} + Q\Delta v\Big)^2\\
			&= \sum_{i > 1}v^{\alpha}|\nabla v|^{\gamma}G_{ii}^2 + 2Q\sum_{i > 1}v^{\alpha}|\nabla v|^{\gamma}G_{ii}\Delta v + (n - 1)Q^2 v^{\alpha}|\nabla v|^{\gamma}(\Delta v)^2.
	\end{align*}
	If $(n - 1)Q + S = 1$, then
	\begin{equation*}
		\begin{aligned}
			G_{11} + \sum_{i > 1}G_{ii} = 0,
		\end{aligned}
	\end{equation*}
	and
	\begin{equation*}
		\begin{aligned}
			\sum_{i > 1}v^{\alpha}|\nabla v|^{\gamma}v_{ii}^2 &= \sum_{i > 1}v^{\alpha}|\nabla v|^{\gamma}\Big(G_{ii} + Q\Delta v\Big)^2\\
			&= \sum_{i > 1}v^{\alpha}|\nabla v|^{\gamma}G_{ii}^2 - 2Qv^{\alpha}|\nabla v|^{\gamma}G_{11}\Delta v + (n - 1)Q^2 v^{\alpha}|\nabla v|^{\gamma}(\Delta v)^2.
		\end{aligned}
	\end{equation*}	
	It follows that
	\begin{equation}\label{8.239.37}
		\begin{aligned}
			\Rightarrow & \Big[1 - S^2 + \gamma S - \gamma S^2 - (n - 1)Q^2\Big]v^{\alpha}|\nabla v|^{\gamma}(\Delta v)^2\\
			&= \Big( v^{\alpha}|\nabla v|^{\gamma}\Delta v v_i\Big)_i - ( v^{\alpha}|\nabla v|^{\gamma}v_{ij}v_j)_i + \frac{\alpha S - \alpha}{1 + \gamma S + 2S} (v^{\alpha - 1}|\nabla v|^{\gamma + 2}v_i)_i\\
			&+ \sum_{i + j > 2} v^{\alpha}|\nabla v|^{\gamma}G_{ij}^2  + \frac{\alpha(\alpha - 1)(1 - S)}{1 + \gamma S + 2S}v^{\alpha - 2}|\nabla v|^{\gamma +4}+  \frac{\alpha(\gamma + 3)}{1 + \gamma S + 2S} v^{\alpha - 1}|\nabla v|^{\gamma + 2} G_{11}\\
			&+ \Big(2\gamma S - \gamma + 2S - 2Q\Big) v^{\alpha} |\nabla v|^{\gamma}G_{11}\Delta v + (1 + \gamma) v^{\alpha}|\nabla v|^{\gamma}G_{11}^2 + \sum_{i > 1} \gamma v^{\alpha}|\nabla v|^{\gamma }G_{i1}^2. \\
		\end{aligned}
	\end{equation}

	\begin{remark}
		The divergence term
		\begin{equation*}
			A := (v^{\alpha}|\nabla v|^{\gamma}v_i \Delta v)_i,
		\end{equation*}
		 is important since it is related to the third order derivative of $v$. It will be explained later.
	\end{remark}

		\item The term $A$:

	\begin{equation*}
		\begin{aligned}
			A &= (v^{\alpha}|\nabla v|^{\gamma}v_i \Delta v)_i\\
			&= \alpha v^{\alpha - 1}|\nabla v|^{\gamma + 2}\Delta v + \gamma v^{\alpha}|\nabla v|^{\gamma - 2} v_i v_j v_{ij}\Delta v + v^{\alpha}|\nabla v|^{\gamma}(\Delta v)^2\\
			&+ \underset{A_1}{ v^{\alpha}|\nabla v|^{\gamma}v_i (\Delta v)_i}\\
			&= 	\alpha v^{\alpha - 1}|\nabla v|^{\gamma + 2}\Delta v + \gamma v^{\alpha}|\nabla v|^{\gamma } G_{11}\Delta v + (1 + \gamma S) v^{\alpha}|\nabla v|^{\gamma}(\Delta v)^2\\
			&+ \underset{A_1}{ v^{\alpha}|\nabla v|^{\gamma}v_i (\Delta v)_i}.\\
		\end{aligned}
	\end{equation*}
	Substitute \eqref{1} into $A_1$,
	\begin{equation*}
		\begin{aligned}
			A_1 &=  v^{\alpha}|\nabla v|^{\gamma}v_i \Big( - v^{p}|\nabla v|^q\Big)_i\\
			&= -pv^{\alpha + p - 1}|\nabla v|^{\gamma + q + 2} - q v^{\alpha + p}|\nabla v|^{\gamma + q - 2}v_i v_j v_{ij}.
		\end{aligned}
	\end{equation*}	
	Multiply \eqref{1} by $v^{\alpha - 1}|\nabla v|^{\gamma + 2}$
	\begin{equation}
		\begin{aligned}
			v^{\alpha + p - 1}|\nabla v|^{\gamma + q + 2} = -v^{\alpha - 1}|\nabla v|^{\gamma + 2}\Delta v .
		\end{aligned}
	\end{equation}
	Therefore we have
	\begin{equation}\label{sub2_2}
		\begin{aligned}
			A_1 &= - q v^{\alpha + p}|\nabla v|^{\gamma + q - 2}v_i v_j v_{ij} + p v^{\alpha - 1}|\nabla v|^{\gamma + 2}\Delta v.
		\end{aligned}
	\end{equation}
	On the other hand, we know
	\begin{equation*}
		\begin{aligned}
			&- q v^{\alpha + p}|\nabla v|^{\gamma + q - 2}v_i v_j v_{ij}\\
			&= qv^{\alpha}|\nabla v|^{\gamma - 2}v_i v_j v_{ij} \Delta v\\
			&= q v^{\alpha}|\nabla v|^{\gamma}G_{11}\Delta v +  qS v^{\alpha}|\nabla v|^{\gamma}(\Delta v)^2.\\
		\end{aligned}
	\end{equation*}
	So
	\begin{equation*}
		\begin{aligned}
			A &= (v^{\alpha}|\nabla v|^{\gamma}v_i \Delta v)_i\\
			&= 	\alpha v^{\alpha - 1}|\nabla v|^{\gamma + 2}\Delta v + \gamma v^{\alpha}|\nabla v|^{\gamma } G_{11}\Delta v + (1 + \gamma S) v^{\alpha}|\nabla v|^{\gamma}(\Delta v)^2\\
			&+ \underset{A_1}{ v^{\alpha}|\nabla v|^{\gamma}v_i (\Delta v)_i}\\
			&= \alpha v^{\alpha - 1}|\nabla v|^{\gamma + 2}\Delta v + \gamma v^{\alpha}|\nabla v|^{\gamma } G_{11}\Delta v + (1 + \gamma S)v^{\alpha}|\nabla v|^{\gamma}(\Delta v)^2\\
			&+ p v^{\alpha - 1}|\nabla v|^{\gamma + 2}\Delta v + q v^{\alpha}|\nabla v|^{\gamma}G_{11}\Delta v +  qS v^{\alpha}|\nabla v|^{\gamma}(\Delta v)^2\\
			&= (\gamma + q) v^{\alpha}|\nabla v|^{\gamma } G_{11}\Delta v + (1 + \gamma S + qS) v^{\alpha}|\nabla v|^{\gamma}(\Delta v)^2\\
			&+ ( \alpha + p) v^{\alpha - 1}|\nabla v|^{\gamma + 2}\Delta v.\\
		\end{aligned}
	\end{equation*}
	Substitute \eqref{section1_equ5} into it:
	\begin{equation*}
		\begin{aligned}
			\Rightarrow& - (1 + \gamma S + qS) v^{\alpha}|\nabla v|^{\gamma}(\Delta v)^2 \\
			&= -A + (\gamma + q) v^{\alpha}|\nabla v|^{\gamma } G_{11}\Delta v  + ( \alpha + p) \cdot\Bigg[  \frac{1}{1 + \gamma S + 2S} (v^{\alpha - 1}|\nabla v|^{\gamma + 2}v_i)_i \\
			&- \frac{\alpha - 1}{1 + \gamma S + 2S}v^{\alpha - 2}|\nabla v|^{\gamma +4} - \frac{\gamma + 2}{1 + \gamma S + 2S} v^{\alpha - 1}|\nabla v|^{\gamma + 2} G_{11}\Bigg] ,
		\end{aligned}
	\end{equation*}
	
	and
	\begin{equation}\label{sub3_final4}
		\begin{aligned}
			\Rightarrow& - (1 + \gamma S + qS) v^{\alpha}|\nabla v|^{\gamma}(\Delta v)^2\\
			 &= -A +  \frac{\alpha + p}{1 + \gamma S + 2S} (v^{\alpha - 1}|\nabla v|^{\gamma + 2}v_i)_i  + (\gamma + q) v^{\alpha}|\nabla v|^{\gamma } G_{11}\Delta v \\
			 &- \Big( \alpha + p\Big)\frac{\gamma + 2}{1 + \gamma S + 2S}v^{\alpha - 1}|\nabla v|^{\gamma + 2} G_{11} - \Big(\alpha + p\Big)\frac{\alpha - 1}{1 + \gamma S + 2S}v^{\alpha - 2}|\nabla v|^{\gamma + 4}.
		\end{aligned}
	\end{equation}

	\end{itemize}

	Now we can give the final differential identity.
	Combining \eqref{8.239.37} and \eqref{sub3_final4} to eliminate the term $v^{\alpha}|\nabla v|^{\gamma}(\Delta v)^2$, we obtain
	\begin{equation*}
		\begin{aligned}
			0& = -A +  \frac{\alpha + p}{1 + \gamma S + 2S} (v^{\alpha - 1}|\nabla v|^{\gamma + 2}v_i)_i + (\gamma + q) v^{\alpha}|\nabla v|^{\gamma } G_{11}\Delta v \\
			&- ( \alpha + p)\frac{\gamma + 2}{1 + \gamma S + 2S}v^{\alpha - 1}|\nabla v|^{\gamma + 2} G_{11}  - (\alpha + p )\frac{\alpha - 1}{1 + \gamma S + 2S}v^{\alpha - 2}|\nabla v|^{\gamma + 4}\\
			 &+ \frac{1 + \gamma S + qS}{1 - S^2 + \gamma S - \gamma S^2 - (n - 1)Q^2}\Bigg[ \Big( v^{\alpha}|\nabla v|^{\gamma}\Delta v v_i\Big)_i - ( v^{\alpha}|\nabla v|^{\gamma}v_{ij}v_j)_i \\
			 &+ \frac{\alpha S - \alpha}{1 + \gamma S + 2S} (v^{\alpha - 1}|\nabla v|^{\gamma + 2}v_i)_i\\
			&+ \sum_{i + j > 2} v^{\alpha}|\nabla v|^{\gamma}G_{ij}^2  + \frac{\alpha(\alpha - 1)(1 - S)}{1 + \gamma S + 2S}v^{\alpha - 2}|\nabla v|^{\gamma +4}+  \frac{\alpha(\gamma + 3)}{1 + \gamma S + 2S} v^{\alpha - 1}|\nabla v|^{\gamma + 2} G_{11}\\
			&+ \Big(2\gamma S - \gamma + 2S - 2Q\Big) v^{\alpha} |\nabla v|^{\gamma}G_{11}\Delta v + (1 + \gamma) v^{\alpha}|\nabla v|^{\gamma}G_{11}^2 + \sum_{i > 1} \gamma v^{\alpha}|\nabla v|^{\gamma }G_{i1}^2\Bigg].
		\end{aligned}
	\end{equation*}

	\begin{align}\label{sec1_final}
		\Rightarrow & 0 = W + \Bigg[ \gamma + q + \frac{1 + \gamma S + qS}{1 - S^2 + \gamma S - \gamma S^2 - (n - 1)Q^2}\Big(2\gamma S - \gamma + 2S - 2Q\Big) \Bigg]v^{\alpha}|\nabla v|^{\gamma } G_{11}\Delta v \notag\\
			&+ \frac{1 + \gamma S + qS}{1 - S^2 + \gamma S - \gamma S^2 - (n - 1)Q^2}  v^{\alpha}|\nabla v|^{\gamma}\sum_{i, j = 2}^nG_{ij}  ^2 \notag\\
			& + \frac{1 + \gamma S + qS}{1 - S^2 + \gamma S - \gamma S^2 - (n - 1)Q^2}(1 + \gamma)  v^{\alpha}|\nabla v|^{\gamma}G_{11}^2 \notag\\
			& + \frac{1 + \gamma S + qS}{1 - S^2 + \gamma S - \gamma S^2 - (n - 1)Q^2}(2 + \gamma)\sum_{i > 1}  v^{\alpha}|\nabla v|^{\gamma}G_{1i}^2 \notag\\
			&+ \Bigg[ - ( \alpha + p)\frac{\alpha - 1}{1 + \gamma S + 2S} + \frac{1 + \gamma S + qS}{1 - S^2 + \gamma S - \gamma S^2 - (n - 1)Q^2}\cdot\frac{\alpha(\alpha - 1)(1 - S)}{1 + \gamma S + 2S}   \Bigg]v^{\alpha - 2}|\nabla v|^{\gamma + 4} \notag\\
			&+  \Bigg[ \frac{1 + \gamma S + qS}{1 - S^2 + \gamma S - \gamma S^2 - (n - 1)Q^2}\cdot  \frac{\alpha(\gamma + 3)}{1 + \gamma S + 2S} - (\alpha + p)\frac{\gamma + 2}{1 + \gamma S + 2S}   \Bigg]v^{\alpha - 1}|\nabla v|^{\gamma + 2} G_{11},
	\end{align}
	where $W$ consists of all the divergence terms.
	Then we can rewrite \eqref{sec1_final} as

	\begin{equation}\label{sec2_equ1}
		\begin{aligned}
			0 &= W + a_1 v^{\alpha}|\nabla v|^{\gamma}\sum_{i, j = 2}^nG_{ij}  ^2 + a_4\sum_{i > 1}  v^{\alpha}|\nabla v|^{\gamma}G_{1i}^2  + a_2 v^{\alpha}|\nabla v|^{\gamma}G_{11}^2  + a_3  v^{\alpha - 2}|\nabla v|^{\gamma + 4} \\
			&+ b_1 v^{\alpha - 1}|\nabla v|^{\gamma + 2}G_{11} + b_2 v^{\alpha}|\nabla v|^{\gamma}G_{11}\Delta v ,
		\end{aligned}
	\end{equation}
	with
	\begin{align*}
		a_1 &= \frac{1 + \gamma S + qS}{1 - S^2 + \gamma S - \gamma S^2 - (n - 1)Q^2},\\
			a_2 &=\frac{1 + \gamma S + qS}{1 - S^2 + \gamma S - \gamma S^2 - (n - 1)Q^2}(1 + \gamma) ,\\
			a_3 &= - (\alpha + p)\frac{\alpha - 1}{1 + \gamma S + 2S} + \frac{1 + \gamma S + qS}{1 - S^2 + \gamma S - \gamma S^2 - (n - 1)Q^2}\cdot\frac{\alpha(\alpha - 1)(1 - S)}{1 + \gamma S + 2S} ,\\
			a_4 &= \frac{1 + \gamma S + qS}{1 - S^2 + \gamma S - \gamma S^2 - (n - 1)Q^2}(2 + \gamma),\\
			b_1 &= \frac{1 + \gamma S + qS}{1 - S^2 + \gamma S - \gamma S^2 - (n - 1)Q^2}\cdot  \frac{\alpha(\gamma + 3)}{1 + \gamma S + 2S} - ( \alpha + p)\frac{\gamma + 2}{1 + \gamma S + 2S} ,\\
			b_2 &=  \gamma + q + \frac{1 + \gamma S + qS}{1 - S^2 + \gamma S - \gamma S^2 - (n - 1)Q^2}\Big(2\gamma S - \gamma + 2S - 2Q\Big) .
	\end{align*}

	\section{Conditions}\label{conditions}
	Recalling the results of \cite{MR3959864} , we know the radial solutions of the equation is 
	\begin{equation}
			v_c(r) = c\Big[ Kc^{\frac{(2 - q)^2}{(n - 2)(1 - q)}} + r^{\frac{2 - q}{1 - q}}\Big]^{-\frac{(n - 2)(1 - q)}{2 - q}}.
	\end{equation}
	The result is really inspiring. 
	If we set $v = \tilde K (1 + r^{\beta})^{-(n - 2)\frac{1}{\beta}}$ and $v_1(x_0) = |\nabla v|(x_0)$, then
	\begin{align}
		v_{11} &= \tilde K (n - 2)(1 + r^{\beta})^{-(n - 2)\frac{1}{\beta} - 2} r^{\beta - 2} \Big[ (n - 1) r^{\beta}  - (\beta - 1)\Big], \\
		\Delta v &= - \tilde K (n - 2)(1 + r^{\beta})^{-(n - 2)\frac{1}{\beta} - 2}r^{\beta - 2} \cdot(\beta - 2 + n) , \\
		v^{-1}|\nabla v|^2 &= \tilde K (n - 2)^2(1 + r^{\beta})^{-(n - 2)\frac{1}{\beta} - 2}r^{\beta - 2} \cdot r^{\beta}.
	\end{align}
	So if we hope that 
	\begin{equation}
		G_{11} + cv^{-1}|\nabla v|^2 = 0,
	\end{equation}
	for some constant $c$, then it is necessary to require

	\begin{numcases}{}
		\label{8.23S} S = \frac{\beta - 1}{\beta - 2 + n} = \frac{1}{n - (n - 1)q},\\
		\label{8.23Q}Q = \frac{1 - S}{n - 1} = \frac{1 - q}{n - (n - 1)q}.	
	\end{numcases}
	 At this time, we have

	\begin{equation*}
		\begin{aligned}
			a_1 &= \frac{n - (n - 1)q}{(n - 1)(1 - q)} ,\\
			a_2 &= \frac{n - (n - 1)q}{(n - 1)(1 - q)}(\gamma + 1) ,\\
			a_3 &= -\frac{p(\alpha - 1)}{1 + \gamma S + 2S} ,\\
			b_1 &= \frac{n - (n - 1)q}{(n - 1)(1 - q)}\cdot  \frac{\alpha(\gamma + 3)}{1 + \gamma S + 2S} - ( \alpha + p )\frac{\gamma + 2}{1 + \gamma S + 2S} ,\\
			b_2 &=  \gamma + q + \frac{n - (n - 1)q}{(n - 1)(1 - q)} (2\gamma S - \gamma + 2S - 2Q ) .\\
		\end{aligned}
	\end{equation*}	
	Besides we know that
	\begin{equation*}
		\begin{aligned}
			&(n - 1)\sum_{i > 1}G_{ii}^2 \geq \Big( \sum_{i > 1} G_{ii}\Big)^2 = G_{11}^2,\\
			\Rightarrow &\sum_{i > 1}G_{ii}^2 \geq \frac{1}{n - 1}G_{11}^2.
		\end{aligned}
	\end{equation*}
	Thus \eqref{sec2_equ1} becomes
	\begin{equation}\label{sec2_equ2}
		\begin{aligned}
			0 &\geq W  + \Big(\frac{n}{n - 1} + \gamma\Big) a_1 v^{\alpha}|\nabla v|^{\gamma}G_{11}^2  + a_3  v^{\alpha - 2}|\nabla v|^{\gamma + 4} + b_1 v^{\alpha - 1}|\nabla v|^{\gamma + 2}G_{11}\\
			&+ b_2 v^{\alpha}|\nabla v|^{\gamma}G_{11}\Delta v .\\
		\end{aligned}
	\end{equation}
	Using the identity
	\begin{equation*}
		\begin{aligned}
			-v^{\alpha + p - 1}|\nabla v|^{\gamma + q + 2} &= v^{\alpha - 1}|\nabla v|^{\gamma + 2}\Delta v\\
			&= \frac{1}{1 + \gamma S + 2S} (v^{\alpha - 1}|\nabla v|^{\gamma + 2}v_i)_i - \frac{\alpha - 1}{1 + \gamma S + 2S}v^{\alpha - 2}|\nabla v|^{\gamma +4}\\
			&- \frac{\gamma + 2}{1 + \gamma S + 2S} v^{\alpha - 1}|\nabla v|^{\gamma + 2} G_{11},\\
		\end{aligned}
	\end{equation*}
		then \eqref{sec2_equ2} becomes

	\begin{align}\label{sec2_equ31}
		0 &\geq W  + \Big(\frac{n}{n - 1} + \gamma\Big) a_1 v^{\alpha}|\nabla v|^{\gamma}G_{11}^2  + a_3  v^{\alpha - 2}|\nabla v|^{\gamma + 4} + b_1 v^{\alpha - 1}|\nabla v|^{\gamma + 2}G_{11} \notag\\
			&+ b_2 v^{\alpha}|\nabla v|^{\gamma}G_{11}\Delta v + P\Bigg[ -v^{\alpha + p - 1}|\nabla v|^{\gamma + q + 2} + \frac{\alpha - 1}{1 + \gamma S + 2S}v^{\alpha - 2}|\nabla v|^{\gamma +4} \notag\\
			&+ \frac{\gamma + 2}{1 + \gamma S + 2S} v^{\alpha - 1}|\nabla v|^{\gamma + 2} G_{11} \Bigg] \notag\\
			&= W  + \Big(\frac{n}{n - 1} + \gamma\Big) a_1 v^{\alpha}|\nabla v|^{\gamma}G_{11}^2  + \Bigg(a_3 + P \frac{\alpha - 1}{1 + \gamma S + 2S}\Bigg)  v^{\alpha - 2}|\nabla v|^{\gamma + 4} \notag\\
			&+ \Bigg( b_1 + P \frac{\gamma + 2}{1 + \gamma S + 2S}\Bigg) v^{\alpha - 1}|\nabla v|^{\gamma + 2}G_{11} + b_2 v^{\alpha}|\nabla v|^{\gamma}G_{11}\Delta v - Pv^{\alpha + p - 1}|\nabla v|^{\gamma + q + 2} .
	\end{align}
	We hope that we have already found $S, \gamma, p, q$ such that
	\begin{numcases}{}
		\label{8.23523equ1} b_2 = 0,\\
		P \leq 0,\\
		4 \Big(\frac{n}{n - 1} + \gamma\Big) a_1 \Bigg(a_3 + P \frac{\alpha - 1}{1 + \gamma S + 2S}\Bigg) - \Bigg( b_1 + P \frac{\gamma + 2}{1 + \gamma S + 2S}\Bigg) ^2 > 0.
	\end{numcases}
	In fact, we know the first condition \eqref{8.23523equ1} means that
	\begin{align}
		&b_2 = 0,\\
			 \label{1107b2} \Leftarrow & (\gamma + q)\Big[ 1 - S^2 + \gamma S - \gamma S^2 - (n - 1)Q^2 \Big] + (1 + \gamma S + qS)(2\gamma S - \gamma + 2S - 2Q) = 0 ,\\
			 \Leftarrow& \gamma = (n - 1)q^2 - (n + 1)q.
	\end{align}	
	Besides we can rewrite the identity \eqref{sec2_equ1} as: 
	\begin{equation}
		\begin{aligned}
			0 &= W(\varepsilon) + \varepsilon v^{\alpha}|\nabla v|^{\gamma}v_{i j}^2 + \varepsilon v^{\alpha - 2}|\nabla v|^{\gamma + 4} + \varepsilon v^{\alpha + 2p}|\nabla v|^{\gamma + 2q}\\
			&+ a_1(\varepsilon) v^{\alpha}|\nabla v|^{\gamma}\sum_{i, j = 2}^nG_{ij}  ^2 + a_4(\varepsilon) \sum_{i > 1}  v^{\alpha}|\nabla v|^{\gamma}G_{1i}^2 + a_2(\varepsilon) v^{\alpha}|\nabla v|^{\gamma}G_{11}^2  + a_3(\varepsilon)  v^{\alpha - 2}|\nabla v|^{\gamma + 4} \\
			&+ b_1(\varepsilon) v^{\alpha - 1}|\nabla v|^{\gamma + 2}G_{11} + b_2(\varepsilon) v^{\alpha}|\nabla v|^{\gamma}G_{11}\Delta v, 
		\end{aligned}
	\end{equation}
	Since $\varepsilon$ is small enough, by continuity
 we can choose the same $S, p, P, q$ as before and $\gamma(\varepsilon)$ such that
	\begin{numcases}{}
		b_2 = 0,\\
		P\leq 0,\\
		4 \Big(\frac{n}{n - 1} + \gamma\Big) a_1 \Bigg(a_3 + P \frac{\alpha - 1}{1 + \gamma S + 2S}\Bigg) - \Bigg( b_1 + P \frac{\gamma + 2}{1 + \gamma S + 2S}\Bigg) ^2 > 0.
	\end{numcases}
	then 
	there exists $\varepsilon > 0$, such that
		\begin{equation} \label{sec2_equ3}
		\begin{aligned}
			&\varepsilon v^{\alpha}|\nabla v|^{\gamma}v_{i j}^2 + \varepsilon v^{\alpha - 2}|\nabla v|^{\gamma + 4} + \varepsilon v^{\alpha + 2p}|\nabla v|^{\gamma + 2q}\\
			& \leq B_1(v^{\alpha} |\nabla v|^{\gamma}v_j v_{ij})_i + B_2 (v^{\alpha}|\nabla v|^{\gamma}v_i\Delta v)_i + B_3 (v^{\alpha - 1}|\nabla v|^{\gamma + 2}v_i)_i.
		\end{aligned}
	\end{equation}

	\section{Young inequality}\label{Young}
	This section we will show how to prove $\nabla v \equiv 0$ by \eqref{sec2_equ3}. We must point out that the following integration must be over the area $\Gamma := \{ x \in \mathbb R^n : |\nabla v(x)| > 0\}$  since $\gamma$ may be negative.
	Define $\eta$ is a smooth cut-off function, satisfying that
	\begin{equation*}
		\begin{aligned}
			&\eta \equiv 1 \quad \text{in} \quad B_{\frac{1}{2} R},\\
			&\eta \equiv 0 \quad \text{in} \quad \mathbb R^n\backslash B_{R}.\\
		\end{aligned}
	\end{equation*}
	 Suppose the following holds:
	\begin{equation}\label{int_condtion}
		\begin{aligned}
			&\int_{\partial \Gamma \cap B_{R}} v^{\alpha}|\nabla v|^{\gamma}v_{j}v_{ij}\eta^{\delta}\nu_{i} = 0,\\
			&\int_{\partial \Gamma \cap B_{R}} v^{\alpha + p}|\nabla v|^{\gamma + q}v_{i} \eta^{\delta}\nu_{i} = 0,\\
			&\int_{\partial \Gamma \cap B_{R}} v^{\alpha - 1}|\nabla v|^{\gamma + 2}v_{i} \eta^{\delta}\nu_{i} = 0,\\
		\end{aligned}
	\end{equation}
	where $\nu(x) = (\nu_1, \cdots, \nu_n)$ is the outer normal vector of $\partial \Omega$ at $x$.
	Then multiply \eqref{sec2_equ3} by $\eta^{\delta}$ and integrate over $\Gamma$,

	\begin{equation*}
		\begin{aligned}
			&\varepsilon \int v^{\alpha}|\nabla v|^{\gamma}v_{i j}^2\eta^\delta + \varepsilon \int  v^{\alpha - 2}|\nabla v|^{\gamma + 4}\eta^\delta + \varepsilon \int v^{\alpha + 2p}|\nabla v|^{\gamma + 2q} \eta^{\delta}\\
			&\leq -B_1\delta\int v^{\alpha} |\nabla v|^{\gamma}v_j v_{ij}\eta^{\delta - 1}\eta_i - B_2\delta\int v^{\alpha}|\nabla v|^{\gamma}v_i\Delta v \eta^{\delta - 1}\eta_i -  B_3\delta \int v^{\alpha - 1}|\nabla v|^{\gamma + 2}v_i \eta^{\delta - 1}\eta_i\\
			&= -B_1\delta\int v^{\alpha} |\nabla v|^{\gamma}v_j v_{ij}\eta^{\delta - 1}\eta_i + B_2\delta\int v^{\alpha + p}|\nabla v|^{q + \gamma}v_i \eta^{\delta - 1}\eta_i\\
			&- B_3\delta\int v^{\alpha - 1}|\nabla v|^{\gamma + 2}v_i \eta^{\delta - 1}\eta_i\\
			&\leq \frac{\varepsilon}{2} \int v^{\alpha}|\nabla v|^{\gamma}v_{i j}^2\eta^\delta + C\int v^{\alpha} |\nabla v|^{\gamma + 2}\eta^{\delta - 2}|\nabla \eta|^2\\
			&+ \frac{\varepsilon}{2} \int v^{\alpha + 2p}|\nabla v|^{\gamma + 2q}\eta^{\delta} + C\int v^{\alpha}|\nabla v|^{\gamma + 2}\eta^{\delta - 2}|\nabla \eta|^2\\
			&+ \frac{\varepsilon}{2}\int v^{\alpha - 2}|\nabla v|^{\gamma + 4}\eta^{\delta} + C\int v^{\alpha}|\nabla v|^{\gamma + 2}\eta^{\delta - 2}|\nabla \eta|^2,
		\end{aligned}
	\end{equation*}
	
	\begin{equation}\label{wwz}
		\begin{aligned}
			\Rightarrow & \int v^{\alpha - 2}|\nabla v|^{\gamma + 4}\eta^\delta  + \int v^{\alpha +  2p}|\nabla v|^{\gamma + 2q}\eta^{\delta} \leq C\int v^{\alpha}|\nabla v|^{\gamma + 2}\eta^{\delta - 2}|\nabla \eta|^2.
		\end{aligned}
	\end{equation}	
	Define $p_1, q_1, \sigma_1 > 0$, such that

	\begin{numcases}{}
		\frac{1}{p_1} + \frac{1}{q_1} + \frac{1}{\sigma_1} = 1, \text{ and } p_1, q_1, \sigma_1 > 0,\\
		\label{08235401} \frac{\alpha - 2}{A} = \frac{\gamma + 4}{B} = p_1,\\
		\label{08235402}\frac{\alpha + 2p }{\alpha - A} = \frac{\gamma + 2q}{\gamma + 2 - B} = q_1.
	\end{numcases}
This means that
	\begin{equation*}
		\begin{aligned}
			\Rightarrow& A = \frac{(\alpha - 2)B}{\gamma + 4} = -\frac{(\alpha + 2p)(\gamma + 2 - B)}{\gamma + 2q} + \alpha,\\
			\Rightarrow& B =  (\gamma + 4)\frac{(\gamma + 2)p + (1 - q)\alpha}{(\gamma + 4)p + (2 - q)\alpha + \gamma + 2q},\\
			&A =  (\alpha - 2)\frac{(\gamma + 2)p + (1 - q)\alpha}{(\gamma + 4)p + (2 - q)\alpha + \gamma + 2q}.\\
		\end{aligned}
	\end{equation*}

	We need to check that
	
	\begin{numcases}{}
		 \label{sec2_cond1} 1 - \frac{2}{n} < \frac{1}{p_1} + \frac{1}{q_1} < 1,\\
		\label{sec2_cond2} p_1, q_1 > 0.
	\end{numcases}
	If they are satisfied, then
	\begin{equation*}
		\begin{aligned}
			 & \int v^{\alpha - 2}|\nabla v|^{\gamma + 4}\eta^\delta + \int v^{\alpha +  2p}|\nabla v|^{\gamma + 2q}\eta^{\delta}\\
			 & \leq C\int v^{\alpha}|\nabla v|^{\gamma + 2}\eta^{\delta - 2}|\nabla \eta|^2\\
			 &\leq \frac{1}{2} \int v^{\alpha - 2}|\nabla v|^{\gamma + 4}\eta^\delta + \frac{1}{2}\int v^{\alpha +  2p}|\nabla v|^{\gamma + 2q}\eta^{\delta} + C\int \eta^{\delta - 2\sigma_1}|\nabla \eta|^{2\sigma_1},\\
			\Rightarrow &  \int v^{\alpha - 2}|\nabla v|^{\gamma + 4}\eta^\delta + \int v^{\alpha +  2p}|\nabla v|^{\gamma + 2q}\eta^{\delta} \leq CR^{n - 2\sigma_1} \rightarrow 0, \text{as R tends to infinity}.
		\end{aligned}
	\end{equation*}
	Thus $v \equiv const$. Next we will show that there exist $\alpha, \gamma, \delta, p, q$ such that the above conditions will be satisfied at the same time. 
	
	\section{Proof of theorems}
	There are three cases for different $0 < q < 2$:
	\subsection{ $0 < q \leq \frac{1}{n - 1}$}
		We prefer to focus more on the case $q > 0$. If $0 < q \leq \frac{1}{n - 1}$ and $\varepsilon_1 > 0$ is small, by \eqref{8.23S} we choose
		\begin{equation}
			\gamma = (n - 1)q^2 - (n + 1)q + \varepsilon_1 \geq -\frac{n}{n - 1} + \varepsilon_1.
		\end{equation}
		If $\varepsilon_1 = 0$, then solving the quadratic equation \eqref{1107b2} with respect to $S$,  we get $S = \frac{1}{n - (n - 1)q}$ or $ \frac{2 - q}{n q - (n - 1)q^2} $. So for $\varepsilon_1 > 0$ small enough, there always exists one solution
		\begin{equation}
			S = \frac{1}{n - (n - 1)q} + O(\varepsilon_1),
		\end{equation}	
		such that $b_2 = 0$.
		Here $O(\varepsilon_1)$ means the term will tend to zero as $\varepsilon_1$ tends to zero.
		Since $\gamma + q + 1 > 0$,
		\begin{equation*}
			\begin{aligned}
				&\Bigg| \int_{\partial \Gamma \cap B_{R}} v^{\alpha + p}|\nabla v|^{\gamma + q}v_{i} \eta^{\delta}\nu_{i}\Bigg|\\
				&\leq \int_{\partial \Gamma \cap B_{R}} v^{\alpha + p}|\nabla v|^{\gamma + q + 1} \eta^{\delta} = 0.
			\end{aligned}
		\end{equation*}
		Similarly, we can show that conditions \eqref{int_condtion} are satisfied. Then we need that
		\begin{numcases}{}
			P \leq 0,\\
			\label{8.23657} 4 \Big(\frac{n}{n - 1} + \gamma\Big) a_1 \Bigg(a_3 + P \frac{\alpha - 1}{1 + \gamma S + 2S}\Bigg) - \Bigg( b_1 + P \frac{\gamma + 2}{1 + \gamma S + 2S}\Bigg) ^2 > 0.
		\end{numcases}
		In fact, we know
		\begin{align}
			a_1 &= \frac{1 + \gamma S + qS}{1 - S^2 + \gamma S - \gamma S^2 - (n - 1)Q^2},\\
				a_3 &= -\frac{p(\alpha - 1)}{1 + \gamma S + 2S} + \frac{a_1(1 - S) - 1}{1 + \gamma S + 2S}\alpha(\alpha - 1) = -\frac{p(\alpha - 1)}{1 + \gamma S + 2S} + O(\varepsilon_1) ,\\
			b_1 &= a_1\cdot  \frac{\alpha(\gamma + 3)}{1 + \gamma S + 2S} - ( \alpha + p )\frac{\gamma + 2}{1 + \gamma S + 2S} .
		\end{align}
		Define
		\begin{align}
			s(p) &:= \frac{(2 - q)a_3}{a_1(\alpha - 1)} + 1 \notag\\
			&= \frac{(2 - q)}{a_1}\left[ \frac{-p}{1 + \gamma S + 2S} + \frac{a_1(1 - S) - 1}{1 + \gamma S + 2S} \right] + 1\\
			& = -\frac{(n - 1)(2 - q)(1 - q) p}{2 + n   - 2n q + (n - 1)q^2} + 1 + O(\varepsilon_1),
		\end{align}
		then
		\begin{align}
			a_3(p) &= \frac{a_1}{2 - q}(s - 1)(\alpha - 1) ,\\
			b_1(p) &= \frac{a_1}{2 - q}\Big[ (s - 1)(\gamma + 2) + (2 - q)\alpha\Big].
		\end{align}
		So it follows that
		\begin{align*}
			&4 \Big(\frac{n}{n - 1} + \gamma\Big) a_1 \Bigg(a_3 + P \frac{\alpha - 1}{1 + \gamma S + 2S}\Bigg) - \Bigg( b_1 + P \frac{\gamma + 2}{1 + \gamma S + 2S}\Bigg) ^2\\
				&= 4 \Big(\frac{n}{n - 1} + \gamma\Big) a_1 a_3(p - P) - b_1(p - P)^2\\
				&= 4 \Big(\frac{n}{n - 1} + \gamma\Big)  \frac{a_1^2}{2 - q}(s - 1)(\alpha - 1)  - \frac{a_1^2}{(2 - q)^2}\Big[ (s - 1)(\gamma + 2) + (2 - q)\alpha\Big]^2\\
				=&a_1^2 \Bigg[ -\alpha^2 + \frac{2}{2 - q}(s - 1)\left( \frac{2}{n - 1} + \gamma     \right)\alpha - \frac{4}{2 - q} \Big(\frac{n}{n - 1} + \gamma\Big)   (s - 1) - \frac{1}{(2 - q)^2} (s - 1)^2(\gamma + 2)^2 \Bigg].
		\end{align*}

		Let 
		\begin{align}\label{alpha}
			\alpha &= \frac{s - 1}{2 - q}\left( \frac{2}{n - 1} + \gamma     \right) \notag\\
			&= \frac{1}{a_1} \Bigg[ -\frac{p - P}{1 + \gamma S + 2S} + \frac{a_1(1 - S) - 1}{1 + \gamma S + 2S}\alpha \Bigg]\left( \frac{2}{n - 1} + \gamma     \right),
		\end{align}
		then we get
		\begin{align}
			&a_1^{-2}\Bigg[ 4 \Big(\frac{n}{n - 1} + \gamma\Big) a_1 \Bigg(a_3 + P \frac{\alpha - 1}{1 + \gamma S + 2S}\Bigg) - \Bigg( b_1 + P \frac{\gamma + 2}{1 + \gamma S + 2S}\Bigg) ^2 \Bigg] \notag\\
			=&\frac{(s - 1)^2}{(2 - q)^2}\left( \frac{2}{n - 1} + \gamma     \right)^2 - \frac{4}{2 - q} \Big(\frac{n}{n - 1} + \gamma\Big)   (s - 1) - \frac{1}{(2 - q)^2} (s - 1)^2(\gamma + 2)^2\notag\\
			=&\frac{s - 1}{2 - q} \left[ \frac{s - 1}{2 - q}\left( \frac{2}{n - 1} + \gamma     \right)^2 - 4\Big(\frac{n}{n - 1} + \gamma\Big)    - \frac{s - 1}{2 - q} (\gamma + 2)^2 \right] \notag\\
			=&4\frac{s - 1}{2 - q} \left[ \frac{s - 1}{2 - q}\left( \frac{2 - n}{n - 1}\gamma + \frac{2n - n^2}{(n - 1)^2}      \right) - \Big(\frac{n}{n - 1} + \gamma\Big)  \right] \notag\\
			=&4\frac{s - 1}{2 - q}\Big(\frac{n}{n - 1} + \gamma\Big) \left[ \frac{2 - n}{n - 1} \cdot \frac{s - 1}{2 - q} - 1  \right] \notag\\
			=&4 \left[ -\frac{(n - 1)(1 - q) (p - P)}{2 + n   - 2n q + (n - 1)q^2}  + O(\varepsilon_1) \right] \Big(\frac{n}{n - 1} + \gamma\Big) \left[ \frac{(1 - q)(2 - n) (p - P)}{2 + n   - 2n q + (n - 1)q^2} + 1 + O(\varepsilon_1)  \right] .
		\end{align}
		Thus we deduce that if
	\begin{numcases}{}
		\alpha =  \frac{s - 1}{2 - q}\left( \frac{2}{n - 1} + \gamma     \right) =: \alpha_1 ,\\
		0 < p - P < \frac{1}{n - 2}\Bigg[ n + \frac{2 - q}{1 - q} - q(n - 1) \Bigg] =:p_*,
	\end{numcases}
		there exists $\varepsilon > 0$ much smaller than $\varepsilon_1$, such that
		\begin{equation}
		\begin{aligned}
			&\varepsilon v^{\alpha}|\nabla v|^{\gamma}v_{ij}^2 + \varepsilon v^{\alpha - 2}|\nabla v|^{\gamma + 4} + \varepsilon v^{\alpha + 2p}|\nabla v|^{\gamma +  2q}\\
			& \leq B_1(v^{\alpha} |\nabla v|^{\gamma}v_j v_{ij})_i + B_2 (v^{\alpha}|\nabla v|^{\gamma}v_i\Delta v)_i + B_3 (v^{\alpha - 1}|\nabla v|^{\gamma + 2}v_i)_i.
		\end{aligned}
	\end{equation}
	If we choose $p - P = p_* - \varepsilon$ and $\alpha = \alpha_1$ for $0 < \varepsilon << \varepsilon_1$ very small , then 
	\begin{equation}
		\begin{aligned}
			\alpha_1 &= -\frac{n - 1}{n - 2}\Big(\gamma + \frac{2}{n - 1}\Big) + O(\varepsilon_1).
		\end{aligned}
	\end{equation}
	For simplicity, we ignore the term $O(\varepsilon_1)$ if there is no confusion.
		For fixed $q, \gamma, \alpha$, by \eqref{08235401} and \eqref{08235402} we have
	\begin{equation}
		\begin{aligned}
			B(p) &= (\gamma + 4)\frac{(\gamma + 2)p + (1 - q)\alpha_1}{(\gamma + 4)p + (2 - q)\alpha_1 + \gamma + 2q}\\
			&=: (\gamma + 4)\frac{ap + b}{cp + d}\\
			&= (\gamma + 4)\left(\frac{a}{c} + \frac{bc - ad}{c^2p + cd}\right),
		\end{aligned}
	\end{equation}
	where
	\begin{align}
		a :&= \gamma + 2 ,\\
		b :&= ( 1 - q)\alpha_1,\\
		c :&= \gamma + 4,\\
		d :&= (2 - q)\alpha_1 + \gamma + 2q.
	\end{align}
	Then it holds that
	\begin{equation*}
		\begin{aligned}
			bc - ad &= (\gamma + 4) (1 - q)\alpha - (\gamma + 2)\Big[ (2 - q)\alpha_1 + \gamma + 2q \Big]\\
			&= -(\gamma + 2q)(\alpha_1 + \gamma + 2)\\
			&= -(\gamma + 2q)\left( - \frac{1}{n - 2}\gamma + \frac{2n - 6}{n - 2} \right).\\
		\end{aligned}
	\end{equation*}
	Thus when $0 < q \leq \frac{1}{n - 1}$, 
	\begin{equation*}
		bc - ad > 0.
	\end{equation*}
	We deduce that $B(p)$ is decreasing when
	\begin{equation*}
		cp + d = (\gamma + 4)p + (2 - q)\alpha_1 + \gamma + 2q > 0.
	\end{equation*}
	\begin{lemma}
		When $1 - q < p < p_*$, $B(p)$ satisfies the conditions   \eqref{sec2_cond1} and
		\eqref{sec2_cond2}.
	\end{lemma}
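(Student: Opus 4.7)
The plan is to exploit the monotonicity of $B(p)$ that was just established: on the range $cp+d>0$, $B(p)$ is strictly decreasing. Since $\gamma+4>0$, the quantity $1/p_1 = B/(\gamma+4)$ is then strictly decreasing in $p$; and since $\gamma+2q = (n-1)q(q-1) < 0$ for $0<q<1$, the quantity $1/q_1 = (\gamma+2-B)/(\gamma+2q)$ is also strictly decreasing in $p$ (numerator increasing in $p$, denominator negative). Hence $\tfrac{1}{p_1}+\tfrac{1}{q_1}$ is strictly decreasing on the open interval $(1-q, p_*)$, and it suffices to evaluate it at the two endpoints.

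The key claims are the endpoint identities
\begin{equation*}
\Bigl(\tfrac{1}{p_1}+\tfrac{1}{q_1}\Bigr)\Bigr|_{p=1-q} = 1,
\qquad
\Bigl(\tfrac{1}{p_1}+\tfrac{1}{q_1}\Bigr)\Bigr|_{p=p_*} = 1-\tfrac{2}{n}.
\end{equation*}
Together with monotonicity these yield $1-\tfrac{2}{n} < \tfrac{1}{p_1}+\tfrac{1}{q_1} < 1$ on the entire open interval, which is exactly \eqref{sec2_cond1}. I would verify both identities by substituting $\alpha_1 = -\tfrac{n-1}{n-2}(\gamma+\tfrac{2}{n-1})$ and $\gamma = (n-1)q^2-(n+1)q$ into the closed form $B(p)=(\gamma+4)(ap+b)/(cp+d)$, clearing denominators, and simplifying. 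The second identity explains the very definition of $p_*$: it is tuned precisely so that the Sobolev-type exponent $1-2/n$ is saturated there, which is the reason this endpoint appears in the existence theorem for radial solutions. For the positivity condition \eqref{sec2_cond2}, note that $p_1 > 0 \iff B > 0$, and, since $\gamma + 2q < 0$, that $q_1 > 0 \iff B > \gamma + 2$. Both sign conditions can be read off at either endpoint and then propagated to the whole open interval by the established monotonicity of $B$.

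The main obstacle is the algebraic verification at the critical endpoint $p=p_*$: the formula $p_* = \tfrac{1}{n-2}\bigl[n+\tfrac{2-q}{1-q}-q(n-1)\bigr]$ is intricate, and substituting into the rational function $B(p)$ produces an expression in $(n,q)$ whose simplification to $1-2/n$ requires several cancellations, driven essentially by the factor $\gamma+2q$ that appears in the denominator. The calculation is purely mechanical but demands care; the endpoint $p=1-q$ is considerably easier because the factor $(1-q)$ factors out of the numerator of $B$. Finally, the $O(\varepsilon_1)$ perturbation carried along from Section~\ref{1} is harmless: the strict inequalities on the open interval persist under small perturbations by continuity, which legitimizes having worked with the unperturbed values of $\gamma$ and $\alpha_1$.
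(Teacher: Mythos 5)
Your overall strategy coincides with the paper's: establish that $B(p)$ is decreasing on $(1-q,p_*)$, deduce that $\tfrac{1}{p_1}+\tfrac{1}{q_1}$ is decreasing there, and control it by its behaviour at the two endpoints. The endpoint computation at $p=1-q$ is correct: both the numerator and the denominator of $B(1-q)$ acquire the common factor $\gamma+2+\alpha_1$, giving $B(1-q)=\tfrac{(\gamma+4)(1-q)}{2-q}$, which is exactly the threshold value at which $\tfrac{1}{p_1}+\tfrac{1}{q_1}=1$; this matches the paper's equivalence $\tfrac{1}{p_1}+\tfrac{1}{q_1}<1\Leftrightarrow p>1-q$.

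However, your second ``key claim'', the identity $\bigl(\tfrac{1}{p_1}+\tfrac{1}{q_1}\bigr)\big|_{p=p_*}=1-\tfrac{2}{n}$, is false in the interior of the range $0<q<\tfrac{1}{n-1}$. The condition $\tfrac{1}{p_1}+\tfrac{1}{q_1}>1-\tfrac{2}{n}$ is equivalent to $B>\tfrac{\gamma+4}{n(2-q)}\,[n-(n-1)q](1-q)=:\mathrm{RHS}$, while a direct computation gives $B(p_*)=\tfrac{\gamma+4}{2-q}\cdot\tfrac{(1-q)[\,n+1-(2n-2)q\,]}{n+1-(n-1)q}$. The difference $B(p_*)-\mathrm{RHS}$ has, after clearing the (positive) denominators, the sign of $(n-1)q\,[1-(n-1)q]$, so equality holds only at $q=0$ and $q=\tfrac{1}{n-1}$, and for $0<q<\tfrac{1}{n-1}$ one has the \emph{strict} inequality $B(p_*)>\mathrm{RHS}$. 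Concretely, for $n=3$ and $q=\tfrac14$ one finds $\bigl(\tfrac{1}{p_1}+\tfrac{1}{q_1}\bigr)\big|_{p=p_*}=\tfrac37\neq\tfrac13$. Hence the ``purely mechanical'' verification you propose would not close as stated; you must replace the claimed identity by the one-sided inequality $B(p_*)\geq\mathrm{RHS}$ (which is exactly what the paper proves), after which the conclusion follows by your monotonicity argument. The remainder of your proposal — the monotonicity of $\tfrac{1}{p_1}+\tfrac{1}{q_1}$, the sign analysis $p_1>0\Leftrightarrow B>0$ and $q_1>0\Leftrightarrow B>\gamma+2$ (propagated via monotonicity, noting $\gamma+2>0$), and the observation that the $O(\varepsilon_1)$ perturbation is absorbed by the strict inequalities — is sound and agrees with the paper.
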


\begin{proof}
	Firstly, we know that when $1 - q < p < p_*$
	\begin{equation*}
		\begin{aligned}
			&(\gamma + 4)p + (2 - q)\alpha_1 + \gamma + 2q \\
		&> (\gamma + 4)(1 - q) + (2 - q)\alpha_1 + \gamma + 2q \\
		&= (\gamma + 4)(1 - q) - (2 - q)\frac{n - 1}{n - 2}\left(\gamma + \frac{2}{n - 1}\right) + \gamma + 2q \\
		&= \frac{\gamma}{n - 2}(q - 2) + \frac{n - 3}{n - 2}(4 - 2q) > 0.
		\end{aligned}
	\end{equation*}
	So $B(p)$ is decreasing. Besides, we know
\begin{equation*}
	\begin{aligned}
		\frac{1}{p_1} + \frac{1}{q_1} &= \frac{2(q - 2)}{(\gamma + 4)(\gamma + 2q)}B + \frac{\gamma + 2}{\gamma + 2q},
	\end{aligned}
\end{equation*}
	and 
	\begin{equation*}
		\frac{2(q - 2)}{(\gamma + 4)(\gamma + 2q)} > 0.
	\end{equation*}
	By direct computations, we have
	\begin{align}
			&\frac{1}{p_1} + \frac{1}{q_1} > 1 - \frac{2}{n},\\
		\Leftrightarrow& \frac{2(q - 2)}{(\gamma + 4)(\gamma + 2q)}B + \frac{\gamma + 2}{\gamma + 2q} > 1 - \frac{2}{n},\\
		\Leftrightarrow& B  > \frac{(\gamma + 4)} {2(q - 2)} \left[ \frac{n - 2}{n}(\gamma + 2q) - \gamma - 2 \right] .
	\end{align}
		In fact, it holds that
	\begin{equation}
		\begin{aligned}
			RHS &= \frac{(\gamma + 4)} {2(q - 2)} \left[ \frac{n - 2}{n}(n - 1)q(q - 1) - (n - 1)q^2 + (n + 1)q - 2 \right] \\
			&= \frac{(\gamma + 4)} {2(q - 2)} \left[ -\frac{ 2}{n}(n - 1)q^2 + \left(4 - \frac{2}{n}\right)q - 2 \right] \\
			&= \frac{(\gamma + 4)} {n(2 - q)}\Big[ n - (n - 1)q\Big](1 - q).
		\end{aligned}
	\end{equation}
		Recalling that 
		\begin{align}
			p_*& = \frac{(n - 1)q^2 - 2nq + n + 2}{(n - 2)(1 - q)},\\
			\alpha_1 &= -\frac{n - 1}{n - 2}\Big(\gamma + \frac{2}{n - 1}\Big),
		\end{align}
		we know
	\begin{equation}
		\begin{aligned}
			B(p_*) &= (\gamma + 4)\frac{(\gamma + 2)p_* + (1 - q)\alpha_1}{(\gamma + 4)p_* + (2 - q)\alpha_1 + \gamma + 2q}\\
			 &= (\gamma + 4)\frac{(\gamma + 2) \frac{(n - 1)q^2 - 2nq + n + 2}{(n - 2)(1 - q)}  -\frac{n - 1}{n - 2}\Big(\gamma + \frac{2}{n - 1}\Big) (1 - q)}{(\gamma + 4) \frac{(n - 1)q^2 - 2nq + n + 2}{(n - 2)(1 - q)}  -\frac{n - 1}{n - 2}\Big(\gamma + \frac{2}{n - 1}\Big) (2 - q) + \gamma + 2q}\\
			 &= (\gamma + 4)\frac{ (-2q + 3)\gamma + 2(n - 2)q^2 - (4n - 4)q + 2n + 2 }{ (2 - q)\gamma + 2(n - 1)q^2 - (6n - 2)q + 4n + 4}\\
			  &= (\gamma + 4)\frac{ (-2n + 2)q^3 + (7n - 5)q^2 + (-7n + 1)q + 2n + 2 }{ (2 - q)\Big[(n - 1)q^2 - (n + 1)q\Big] + 2(q - 2)\Big[(n - 1)q - n - 1\Big]}\\
			   &= \frac{\gamma + 4}{2 - q}\frac{(1 - q)\Big[n + 1 - (2n - 2)q \Big] }{ \Big[n + 1 - (n - 1)q \Big]}.\\
		\end{aligned}
	\end{equation}
So we get when $0 \leq q \leq \frac{1}{n - 1}$ and $1 - q < p < p_*$
\begin{align}
	&B(p) > B(p_*) \geq RHS,\\
	\Rightarrow &\frac{1}{p_1} + \frac{1}{q_1} > 1 - \frac{2}{n}.
\end{align}
Besides, we also have
\begin{align}
	&\frac{1}{p_1} + \frac{1}{q_1} < 1,\\
	\Leftrightarrow & p > 1 - q.
\end{align}
	Finally we need to check that 
	\begin{equation*}
		\begin{aligned}
			B(p) > \gamma + 2.
		\end{aligned}
	\end{equation*}
	In fact, we have
	\begin{equation*}
		\begin{aligned}
			& B(p) > \gamma + 2,\\
			\Leftrightarrow&  (\gamma + 4)\Big[(\gamma + 2)p + (1 - q)\alpha\Big] > \Big[(\gamma + 4)p + (2 - q)\alpha + \gamma + 2q\Big](\gamma + 2),\\
			\Leftrightarrow & (\gamma + 2q)(\gamma + 2 + \alpha) < 0.
		\end{aligned}
	\end{equation*}
	This is proved before. Therefore, we obtain that $p_1, q_1 > 0$.
	
\end{proof}

		\subsection{ $q > \frac{1}{n - 1}$}
		 If $q > \frac{1}{n - 1}$, this case is complex since we can not choose $\gamma = (n - 1)q^2 - (n + 1)q$ any more. In this case we choose $S = 0$. At this time, we have

	 \begin{equation*}
		\begin{aligned}
			a_1 &= \frac{n - 1}{n - 2},\\
			a_3 &= - p(\alpha - 1) + \frac{1}{n - 2}\cdot \alpha(\alpha - 1) ,\\
			b_1 &= \frac{n - 1}{n - 2}\cdot\alpha(\gamma + 3)- ( \alpha + p)(\gamma + 2) ,\\
			b_2 &=  \gamma + q + \frac{n - 1}{n - 2}\Big( - \gamma - \frac{2}{n - 1}\Big).\\
		\end{aligned}
	\end{equation*}

	If $b_2 = 0$, then
	\begin{equation*}
		\begin{aligned}
			b_2 &= -\frac{1}{n - 2}\gamma + q - \frac{2}{n - 2} = 0,\\
			\Rightarrow \gamma &= (n - 2)q - 2 > -\frac{n}{n - 1}.
		\end{aligned}
	\end{equation*}
	
	Choose $\gamma = (n - 2)q - 2$ and define $B_0 := a_1\left(\frac{n}{n - 1} + \gamma \right)$ and

	 \begin{equation*}
		\begin{aligned}
			a_1 &= \frac{n - 1}{n - 2},\\
			a_3 &= - p(\alpha - 1) + \frac{1}{n - 2}\cdot \alpha(\alpha - 1) ,\\
			b_1 &= q\alpha + \frac{n - 1}{n - 2}\alpha - (n - 2)pq ,\\
			b_2 &= 0 ,\\
			B_0 &= (n - 1)q - 1.
		\end{aligned}
	\end{equation*}
	By direct computation, conditions \eqref{int_condtion} are satisfied.
	Then using $S = 0$
	
	\begin{align*}
		&4 B_0 \Bigg(a_3 + P \frac{\alpha - 1}{1 + \gamma S + 2S}\Bigg) - \Bigg( b_1 + P \frac{\gamma + 2}{1 + \gamma S + 2S}\Bigg) ^2\\
			&= 4\Big[ (n - 1)q - 1 \Big]\cdot\Bigg[ \frac{1}{n - 2}\alpha(\alpha - 1) - (p - P)(\alpha - 1) \Bigg] - \Bigg[q\alpha + \frac{n - 1}{n - 2}\alpha - (n - 2)(p - P)q\Bigg]^2\\
			&= 4\Big[ (n - 1)q - 1 \Big]\cdot\Bigg[ \frac{1}{n - 2}\alpha^2 - \Big(p - P + \frac{1}{n - 2}\Big)\alpha + p - P \Bigg] - \Bigg[ \Big(q + \frac{n - 1}{n - 2}\Big)\alpha - (n - 2)(p - P)q\Bigg]^2\\
			&= \Bigg\{4\Big[ (n - 1)q - 1 \Big]\cdot \frac{1}{n - 2} - \Big(q + \frac{n - 1}{n - 2}\Big)^2 \Bigg\}\alpha^2\\
			&+ \Bigg\{ -4\Big[ (n - 1)q - 1 \Big]\cdot \Big(p - P + \frac{1}{n - 2}\Big) + 2\Big(q + \frac{n - 1}{n - 2}\Big)(n - 2)(p - P)q\Bigg\}\alpha\\
			&+ 4(p - P)\Big[ (n - 1)q - 1 \Big] - (n - 2)^2(p - P)^2q^2\\
			&= \Bigg[ -q^2 + 2q\frac{n - 1}{n - 2} - \frac{4}{n - 2} - \Big(\frac{n - 1}{n - 2}\Big)^2 \Bigg]\alpha^2\\
			&+ \Bigg[ -2(n - 1)(p - P)q - 4\frac{n - 1}{n - 2}q + 4(p - P) + \frac{4}{n - 2} + 2(n - 2)(p - P)q^2 \Bigg]\alpha\\
			&+ 4(p - P)\Big[ (n - 1)q - 1 \Big] - (n - 2)^2(p - P)^2q^2.
	\end{align*}
		Since 
	\begin{equation*}
		\begin{aligned}
			& -q^2 + 2q\frac{n - 1}{n - 2} - \frac{4}{n - 2} - \Big(\frac{n - 1}{n - 2}\Big)^2\\
			&= -\left(q - \frac{n - 1}{n - 2}\right)^2 - \frac{4}{n - 2} < 0,
		\end{aligned}
	\end{equation*}
	if we choose 
	\begin{equation*}
		\begin{aligned}
			\alpha &= \Bigg[ q^2 - 2q\frac{n - 1}{n - 2} + \frac{4}{n - 2} + \Big(\frac{n - 1}{n - 2}\Big)^2 \Bigg]^{-1} \Bigg[ -(n - 1)(p - P)q - 2\frac{n - 1}{n - 2}q + 2(p - P)\\
			& + \frac{2}{n - 2} + (n - 2)(p - P)q^2 \Bigg]\\
			&= :\alpha_{2},
		\end{aligned}
	\end{equation*}
	then we only need that
	\begin{align*}
			0&<\Bigg\{ \Big[ 2(n - 2)q^2 - 2(n - 1)q + 4\Big](p - P) - 4\frac{n - 1}{n - 2}q + \frac{4}{n - 2} \Bigg\}^2\\
		&- 4 \Bigg[ -q^2 + 2q\frac{n - 1}{n - 2} - \frac{4}{n - 2} - \Big(\frac{n - 1}{n - 2}\Big)^2 \Bigg]\cdot\Bigg\{ 4(p - P)\Big[ (n - 1)q - 1 \Big] - (n - 2)^2(p - P)^2q^2 \Bigg\}\\
		&=\Bigg\{ \Big[ 2(n - 2)q^2 - 2(n - 1)q + 4\Big]^2 + 4 (n - 2)^2q^2\Bigg[ -q^2 + 2q\frac{n - 1}{n - 2} - \frac{4}{n - 2} - \Big(\frac{n - 1}{n - 2}\Big)^2 \Bigg] \Bigg\}(p - P)^2\\
		&+\Bigg\{ 16 \Big[ (n - 2)q^2 - (n - 1)q + 2\Big]\cdot\Big(- \frac{n - 1}{n - 2}q + \frac{1}{n - 2}\Big)\\
		&- 16\Big[ (n - 1)q - 1 \Big] \Bigg[ -q^2 + 2q\frac{n - 1}{n - 2} - \frac{4}{n - 2} - \Big(\frac{n - 1}{n - 2}\Big)^2 \Bigg] \Bigg\}(p - P)\\
		&+ \Big(- 4\frac{n - 1}{n - 2}q + \frac{4}{n - 2}\Big)^2,
	\end{align*}
	\begin{equation*}
		\begin{aligned}
			\Leftrightarrow 0 &< 16\Big[1 - (n - 1)q\Big](p - P)^2 + 16 \Big[1 - (n - 1)q\Big]\cdot\Big[\frac{n - 1}{n - 2}q - \frac{n^2 - 3}{(n - 2)^2}\Big](p - P)\\
			&+ 16\Big(- \frac{n - 1}{n - 2}q + \frac{1}{n - 2}\Big)^2,
		\end{aligned}
	\end{equation*}
	
	\begin{equation}
		\begin{aligned}
			\Leftrightarrow 0 &> (p - P)^2 + \Big[\frac{n - 1}{n - 2}q - \frac{n^2 - 3}{(n - 2)^2}\Big](p - P) + \frac{1 - (n - 1)q}{(n - 2)^2}.
		\end{aligned}
	\end{equation}
	For fixed $q$, let $p_2 > 0$ be one solution to 
	\begin{equation}\label{8.231125}
		\mathbb H(p, q) := p^2 + \Big[\frac{n - 1}{n - 2}q - \frac{n^2 - 3}{(n - 2)^2}\Big]p + \frac{1 - (n - 1)q}{(n - 2)^2} = 0.
	\end{equation}
	Choose $p - P = p_2$ and $\alpha = \alpha_2$. Recall that we have already showed that for fixed $q, \gamma, \alpha$
	\begin{equation}
		\begin{aligned}
			B(p) &= (\gamma + 4)\frac{(\gamma + 2)p + (1 - q)\alpha_2}{(\gamma + 4)p + (2 - q)\alpha_2 + \gamma + 2q}\\
			&=: (\gamma + 4) \frac{ap + b}{cp + d}\\
			&= (\gamma + 4)\left(\frac{a}{c} + \frac{bc - ad}{c^2p + cd}\right),
		\end{aligned}
	\end{equation}
	
	\begin{equation*}
		\begin{aligned}
			bc - ad &= (\gamma + 4) (1 - q)\alpha_2 - (\gamma + 2)\Big[ (2 - q)\alpha_2 + \gamma + 2q \Big]\\
			&= -(\gamma + 2q)(\alpha_2 + \gamma + 2).\\
		\end{aligned}
	\end{equation*}
	
	\begin{claim}\label{09021818equ1}
		For fixed $n > 2$, $p_2(q)$ is decreasing.
	\end{claim}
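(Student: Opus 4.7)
The plan is to apply implicit differentiation to the defining identity $\mathbb H(p_2(q), q) = 0$. Writing $F(p,q) := \mathbb H(p,q)$, this yields
\begin{equation*}
p_2'(q) = -\frac{F_q(p_2(q), q)}{F_p(p_2(q), q)},
\end{equation*}
so the claim reduces to showing that the two partial derivatives have the same sign at $(p_2(q), q)$.

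A direct differentiation gives
\begin{equation*}
F_p(p,q) = 2p + \frac{n-1}{n-2}q - \frac{n^2-3}{(n-2)^2}, \qquad F_q(p,q) = \frac{n-1}{(n-2)^2}\bigl[(n-2)p - 1\bigr].
\end{equation*}
Because the hypothesis $q > \tfrac{1}{n-1}$ makes the constant term of the quadratic $\mathbb H(\cdot, q)$ strictly negative, its two roots are real and of opposite sign, and $p_2$ is by definition the positive one. By Vieta's formulas, $F_p(p_2, q) = p_2 - p_{-}$, where $p_{-} < 0 < p_2$ is the other root; in particular $F_p(p_2, q) > 0$. Consequently $p_2'(q)$ carries the sign opposite to $F_q(p_2,q)$, and it remains to prove $p_2 > \tfrac{1}{n-2}$.

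For this I would simply evaluate $\mathbb H$ at the test point $p = \tfrac{1}{n-2}$. A short algebraic cancellation, in which the $q$-dependent contributions drop out, leaves
\begin{equation*}
\mathbb H\bigl(\tfrac{1}{n-2}, q\bigr) = -\frac{(n-1)^2}{(n-2)^3} < 0
\end{equation*}
for every $n > 2$. Since $\mathbb H(\cdot, q)$ has positive leading coefficient, $\tfrac{1}{n-2}$ must lie strictly between its two roots, giving $p_{-} < \tfrac{1}{n-2} < p_2$; hence $F_q(p_2, q) > 0$. Combining with $F_p(p_2,q) > 0$ gives $p_2'(q) < 0$, which is the claim.

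The argument presents no real obstacle; the only noteworthy point is the fortunate fact that $\mathbb H(1/(n-2), q)$ is independent of $q$, which is exactly what permits a uniform sign conclusion across the entire range $q > \tfrac{1}{n-1}$ without having to solve the quadratic explicitly.
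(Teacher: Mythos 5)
Your proposal is correct and follows essentially the same route as the paper: implicit differentiation of $\mathbb H(p_2(q),q)=0$ together with the observation that $\mathbb H\bigl(\tfrac{1}{n-2},q\bigr)$ is a negative constant independent of $q$, forcing $p_2>\tfrac{1}{n-2}$ and hence $F_q(p_2,q)>0$. The only (equivalent) variation is that you establish $F_p(p_2,q)>0$ via Vieta's formulas and the sign of the constant term for $q>\tfrac{1}{n-1}$, whereas the paper multiplies the bracket by $p_2$ and simplifies using the defining quadratic identity.
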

	\begin{proof}
		By the definition of $p_2(q)$, we have
		\begin{equation}
			\begin{aligned}
				0 &= p_2^2 + \Big[\frac{n - 1}{n - 2}q - \frac{n^2 - 3}{(n - 2)^2}\Big]p_2 + \frac{1 - (n - 1)q}{(n - 2)^2}\\
				&= \frac{n - 1}{n - 2}\left(p_2 - \frac{1}{n - 2}\right)q + p_2^2 - \frac{n^2 - 3}{(n - 2)^2}p_2 + \frac{1}{(n - 2)^2},\\
			\end{aligned}
		\end{equation}
		\begin{align}
		\Rightarrow	0 &= \frac{n - 1}{n - 2}q dp_2 + \frac{n - 1}{n - 2}\left(p_2 - \frac{1}{n - 2}\right) dq + 2p_2 dp_2 - \frac{n^2 - 3}{(n - 2)^2} dp_2.
		\end{align}
		Then we get
		\begin{equation}
			\begin{aligned}
				\frac{d p_2}{dq} = -\left[ \frac{n - 1}{n - 2}q   + 2p_2  - \frac{n^2 - 3}{(n - 2)^2}\right]^{-1}\frac{n - 1}{n - 2}\left(p_2 - \frac{1}{n - 2}\right).
			\end{aligned}
		\end{equation}
		Besides by direct computation and \eqref{8.231125} , we obtain $\mathbb H( \frac{1}{n - 2}, q) < 0$ for any $0 \leq q < 2$. So $p_2 > \frac{1}{n - 2}$ and
		\begin{equation}
			\begin{aligned}
				&\left[ \frac{n - 1}{n - 2}q   + 2p_2  - \frac{n^2 - 3}{(n - 2)^2}\right]p_2\\
				=& \left( \frac{n - 1}{n - 2}q   + 2p_2 \right)p_2 -  \frac{n - 1}{n - 2}\left(p_2 - \frac{1}{n - 2}\right)q - p_2^2  - \frac{1}{(n - 2)^2}\\
				=& p_2^2 + \frac{n - 1}{(n - 2)^2}q  - \frac{1}{(n - 2)^2} > \frac{n - 1}{(n - 2)^2}q \geq 0.
			\end{aligned}
		\end{equation} Hence $p_2(q)$ is decreasing with respect to $q$.
	\end{proof}

	\begin{claim}
		We have $\alpha_2 + \gamma + q > 0$. Hence $\alpha_2 + \gamma + 2 > 0$.
	\end{claim}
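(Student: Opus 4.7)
Since $q<2$, we have $\gamma+q<\gamma+2$, so the second inequality is an immediate consequence of the first. It remains to show $\alpha_{2}+\gamma+q>0$. Recalling $\gamma=(n-2)q-2$, this is equivalent to $\alpha_{2}>2-(n-1)q$. Writing $\alpha_{2}=N/D$ with
\[
D=\Bigl(q-\frac{n-1}{n-2}\Bigr)^{2}+\frac{4}{n-2}>0, \qquad N=p_{-}E+\frac{2(1-(n-1)q)}{n-2},
\]
where $p_{-}:=p-P=p_{2}$ and $E:=(n-2)q^{2}-(n-1)q+2$, the target becomes
\[
T(p_{-},q):=Ep_{-}+\frac{2(1-(n-1)q)}{n-2}+[(n-1)q-2]D>0.
\]

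The plan is to verify $T>0$ by direct computation along the constraint curve $\mathbb{H}(p_{-},q)=0$, which fixes $p_{-}=p_{2}(q)$. As a first consistency check, at the endpoint $q=\frac{1}{n-1}$ one has $p_{2}=\frac{n^{2}-n-1}{(n-2)^{2}}$ (the nontrivial root of $\mathbb{H}(\cdot,\frac{1}{n-1})=0$), and a short calculation exploiting the algebraic identity $(n^{2}-n-1)^{2}=(n-2)^{2}+(n-1)^{2}(n^{2}-3)$ yields $T=0$ there. Thus the one-variable function $q\mapsto T(p_{2}(q),q)$ vanishes at $q=\frac{1}{n-1}$, and the claim reduces to showing strict positivity on the prescribed range.

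To carry this out I would introduce the parameter $r:=(n-2)p_{-}-1$, which is strictly positive by the fact that $\mathbb{H}(\frac{1}{n-2},q)<0$ (recorded in the proof of Claim \ref{09021818equ1}) forces $p_{2}>\frac{1}{n-2}$. The defining relation $\mathbb{H}(p_{-},q)=0$ rewrites as the clean identity
\[
(n-2)r\bigl[(n-1)q+r\bigr]=(n-1)^{2}(r+1),
\]
which eliminates one variable. Substituting $p_{-}=(r+1)/(n-2)$ into $T$ and clearing the positive denominators produces a polynomial in $q$ (and $n$) which, by the previous step, must admit the factor $(q-\frac{1}{n-1})$.

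The main obstacle is algebraic: the coefficient $E$ can itself change sign for $n\ge 8$, since its discriminant in $q$ is $n^{2}-10n+17$, positive from $n=8$ onwards. Consequently the naive monotonicity-in-$p_{-}$ argument is not available. The correct strategy is to absorb this sign change into the cofactor obtained after dividing by $(q-\frac{1}{n-1})$, and to verify that the resulting polynomial in $q$ has the required sign on $(\frac{1}{n-1},1)$ for $n=3$ and on $(\frac{1}{n-1},2)$ for $n\ge 4$. The case distinction in the upper bound on $q$ is presumably exactly what is needed to keep this cofactor on the correct side of zero up to the upper endpoint; checking this reduces to an elementary sign analysis at the two endpoints together with a monotonicity check on the cofactor.
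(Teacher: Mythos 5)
Your reduction is correct and matches the paper's: the second inequality does follow from the first since $q<2$, the identity $\alpha_2+\gamma+q>0 \Leftrightarrow \alpha_2>2-(n-1)q \Leftrightarrow T:=Ep_-+\tfrac{2(1-(n-1)q)}{n-2}+[(n-1)q-2]D>0$ is exactly the paper's inequality \eqref{8.23.10.42} for $\mathbb F$, and your substitution $r=(n-2)p_2-1$ with the constraint $(n-2)r[(n-1)q+r]=(n-1)^2(r+1)$ is precisely the paper's change of variables $w=\tfrac{n-2}{n-1}\bigl(p_2-\tfrac{1}{n-2}\bigr)$ (so $r=(n-1)w$) together with its relation \eqref{09021826equ2}. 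Your endpoint check that $T$ vanishes at $q=\tfrac{1}{n-1}$ is also consistent with the paper, where this corresponds to the vanishing of the factor $(n-1)-(n-2)w$ at $w=\tfrac{n-1}{n-2}$.

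However, the decisive step is missing. Everything after ``Substituting \dots produces a polynomial'' is a plan, not a proof: you correctly identify the obstacle (the coefficient $E=(n-2)q^2-(n-1)q+2$ can change sign for $n\ge 8$, so no monotonicity-in-$p_-$ argument is available), but your proposed resolution --- divide out $(q-\tfrac{1}{n-1})$ and check the sign of the cofactor by ``an elementary sign analysis at the two endpoints together with a monotonicity check'' --- is exactly the nontrivial assertion that needs proving, and you never carry it out. There is also a direction-of-elimination problem in your plan: the constraint is quadratic in $r$ but linear in $q$, so eliminating $r$ to get ``a polynomial in $q$'' introduces a square root; the workable direction (the one the paper takes) is to solve for $q$ as a rational function of $w$ and clear denominators, obtaining a genuine polynomial in $w$. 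The paper then closes the argument with the explicit factorization
\begin{equation*}
(n-2)^3w^3\,\mathbb F=(w+1)\bigl[(n-1)-(n-2)w\bigr]\bigl[(n-2)w^2+1\bigr]\bigl[(n-2)w+1\bigr],
\end{equation*}
which is manifestly positive on $0<w<\tfrac{n-1}{n-2}$ and makes the case distinctions you anticipated unnecessary. Without this (or an equivalent) explicit positivity verification, your argument is incomplete.
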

	\begin{proof}
		\begin{equation}
			\begin{aligned}
				&\alpha_2 + \gamma + q > 0, \\
				\Leftrightarrow & \Big[-(n - 1)p_2 q - 2q\frac{n - 1}{n - 2} + 2p_2 + \frac{2}{n - 2} + (n - 2)p_2 q^2\Big] \\
				&+ \Big[ (n - 1)q - 2\Big] \Bigg[ q^2 - 2q\frac{n - 1}{n - 2} + \frac{4}{n - 2} + \Big(\frac{n - 1}{n - 2}\Big)^2 \Bigg] > 0,\\
			\end{aligned}
		\end{equation}
		\begin{equation}\label{8.23.10.42}
			\begin{aligned}
				\Leftrightarrow &\mathbb F :=  (n - 1)q^3 + \Big[ (n - 2)p_2  - 2 \frac{n^2 - n - 1}{n - 2} \Big]q^2 + \Big[ -(n - 1)p_2 + (n - 1) \frac{n^2 + 4n - 11}{(n - 2)^2} \Big]q\\
				& + 2p_2 + \frac{-2n^2 - 2n + 10}{(n - 2)^2} > 0.
			\end{aligned}
		\end{equation}
	By the proof of Claim \ref{09021818equ1}, we get that $\frac{1}{n - 2} < p_2(q) < p_2(\frac{1}{n - 1}) = \frac{n^2 - n - 1}{(n - 2)^2}$ when $\frac{1}{n - 1} < q < 2$.
	Define 
	\begin{equation}\label{09021826equ1}
		w:= \frac{n - 2}{n - 1}\left(p_2 - \frac{1}{n - 2}\right), \Rightarrow p_2 = \frac{(n - 1)w + 1}{n - 2},
	\end{equation}
	then 
	\begin{equation}
		0 < w < \frac{n - 1}{n - 2}.
	\end{equation}
	Use the fact that $\mathbb H(p_2, q) = 0$ and \eqref{8.231125} :
	\begin{equation}\label{09021826equ2}
		(n - 2)q = - (n - 2)w +  w^{-1} + n - 1.
	\end{equation}
	Substitute \eqref{09021826equ1} and \eqref{09021826equ2} into \eqref{8.23.10.42}: 
	
	\begin{align*}
		\mathbb F &=  (n - 1)q^3 + \Big[ (n - 1)w + 1  - 2 \frac{n^2 - n - 1}{n - 2} \Big]q^2 + \Big[ -(n - 1) \frac{(n - 1)w + 1}{n - 2} + (n - 1) \frac{n^2 + 4n - 11}{(n - 2)^2} \Big]q\\
		& + 2 \frac{(n - 1)w + 1}{n - 2} + \frac{-2n^2 - 2n + 10}{(n - 2)^2} \\
		&=  (n - 1)q^3 + \Big[ (n - 1)w  -  \frac{2n^2 - 3n}{n - 2} \Big]q^2 +  \frac{n - 1}{(n - 2)^2}\Big[ -(n - 1)(n - 2)w + n^2 + 3n - 9 \Big]q\\
		& +  \frac{2(n - 1)w }{n - 2} + \frac{-2n^2 + 6}{(n - 2)^2} ,
	\end{align*}
	
	\begin{align*}
		&\Rightarrow (n - 2)^3 w^3 \mathbb F\\
		=& (n - 1)(n - 2)^3 w^3 q^3 + w\Big[ (n - 1)(n - 2)w  -  2n^2 + 3n \Big] (n - 2)^2w^2q^2 \\
		&+ (n - 1)w^2\Big[ -(n - 1)(n - 2)w + n^2 + 3n - 9 \Big](n - 2)wq  \\
		&+  2(n - 1) (n - 2)^2w^4 + (-2n^2 + 6)(n - 2) w^3\\
		=& (n - 1)\Big[ - (n - 2)w^2  + (n - 1)w +  1\Big]^3\\
		& + w\Big[ (n - 1)(n - 2)w  -  2n^2 + 3n \Big] \Big[ - (n - 2)w^2  + (n - 1)w +  1\Big]^2 \\
		&+ (n - 1)w^2\Big[ -(n - 1)(n - 2)w + n^2 + 3n - 9 \Big]\Big[ - (n - 2)w^2  + (n - 1)w +  1\Big] \\
		& +  2(n - 1) (n - 2)^2w^4 + (-2n^2 + 6)(n - 2) w^3\\
		=& \Big[    (-n^2 + n + 1)w +  n - 1  \Big]\Big[ - (n - 2)w^2  + (n - 1)w +  1\Big]^2 \\
		&+ (n - 1)w^2\Big[ -(n - 1)(n - 2)w + n^2 + 3n - 9 \Big]\Big[ - (n - 2)w^2  + (n - 1)w +  1\Big] \\
		& +  2(n - 1) (n - 2)^2w^4 + (-2n^2 + 6)(n - 2) w^3\\
		=& \Bigg[   (n - 2)^2w^3  +  3(n - 2)(n - 1)w^2  - (n - 2)w   + n - 1 \Bigg]\Big[ - (n - 2)w^2  + (n - 1)w +  1\Big] \\
		& +  2(n - 1) (n - 2)^2w^4 + (-2n^2 + 6)(n - 2) w^3\\
		=& -(n - 2)^3 w^5 - 3(n - 2)^2(n - 1)w^4 + (n - 2)^2 w^3 - (n - 1)(n - 2)w^2\\
		&+ (n - 1)(n - 2)^2w^4 + 3(n - 2)(n - 1)^2w^3 - (n - 1)(n - 2)w^2 + (n - 1)^2 w\\
		&+   (n - 2)^2w^3  +  3(n - 2)(n - 1)w^2  - (n - 2)w   + n - 1  \\
		&+  2(n - 1) (n - 2)^2w^4 + (-2n^2 + 6)(n - 2) w^3\\
		=& -(n - 2)^3 w^5  + (n - 2)(n^2 - 4n + 5) w^3 + (n - 1)(n - 2)w^2  + (n^2 - 3n + 3) w   + n - 1  \\
		=&(w + 1) \Bigg[ -(n - 2)^3 w^4 + (n - 2)^3 w^3 + (n - 2)w^2 + (n - 2)^2  w + n - 1 \Bigg]\\
		=&(w + 1) \Big[ -(n - 2) w + (n - 1)\Big]\cdot \Big[ (n - 2)w^2 + 1\Big] \cdot \Big[ (n - 2)w + 1 \Big].
	\end{align*}
	So we deduce that when $0 < w < \frac{n - 1}{n - 2}$,
	\begin{equation}
		\mathbb F > 0.
	\end{equation}

%
%
	\end{proof}

Now let us recall that
	\begin{equation*}
		\begin{aligned}
			G(p)&:= \frac{1}{p_1} + \frac{1}{q_1} \\
			&= \frac{2(q - 2)}{(\gamma + 4)(\gamma + 2q)}B(p) + \frac{\gamma + 2}{\gamma + 2q}\\
			&= \frac{2(q - 2)}{\gamma + 2q}\left[\frac{a}{c} + \frac{-(\gamma + 2q)(\alpha_2 + \gamma + 2)}{c^2p + cd}\right] + \frac{\gamma + 2}{\gamma + 2q}\\
			&= \frac{2(2 - q)(\alpha_2 + \gamma + 2)}{c^2p + cd} + \frac{\gamma + 2}{\gamma + 4}.
		\end{aligned}
	\end{equation*}
	
	\begin{claim}
		When $p > 1 - q$ and $\frac{1}{n - 1} < q < 2$, we have $cp + d > 0$.
	\end{claim}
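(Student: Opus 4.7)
The plan is to exploit the affine structure of $cp+d$ in the variable $p$. Since $c=\gamma+4=(n-2)q+2$ and the hypothesis $q>\tfrac{1}{n-1}$ ensures $q>0$, one has $c>0$, so $cp+d$ is strictly increasing in $p$. Therefore it suffices to check that $c(1-q)+d\geq 0$ (in fact strictly, which will give $cp+d>0$ for every $p>1-q$).

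Next I would substitute $p=1-q$ into $cp+d=(\gamma+4)p+(2-q)\alpha_2+\gamma+2q$ and simplify using the identity $\gamma+2=(n-2)q$ that comes from the choice $\gamma=(n-2)q-2$. After grouping the terms not involving $\alpha_2$, I expect the part $(\gamma+4)(1-q)+\gamma+2q$ to collapse cleanly to $(2-q)(\gamma+2)$, which then combines with the $\alpha_2$ contribution to produce the factorization
\begin{equation*}
c(1-q)+d=(2-q)\bigl(\alpha_2+\gamma+2\bigr).
\end{equation*}

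Once this factorization is in hand, positivity is immediate: $2-q>0$ follows from the standing hypothesis $q<2$, and $\alpha_2+\gamma+2>0$ is exactly the content of the preceding claim in this section, which has already been established. Hence $c(1-q)+d>0$, and monotonicity in $p$ upgrades this to $cp+d>0$ for all $p>1-q$.

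The only real step to verify carefully is the algebraic collapse yielding the product form; this is a mechanical but slightly delicate grouping using $\gamma+2=(n-2)q$, and it is the one place where an arithmetic slip could undermine the proof. Everything else is either a sign check or a direct citation of the previous claim, so I do not anticipate any genuine obstacle.
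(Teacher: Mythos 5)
Your proposal is correct and follows exactly the paper's argument: bound $cp+d$ below by its value at $p=1-q$ using $c=\gamma+4>0$, verify the collapse $(\gamma+4)(1-q)+\gamma+2q=(2-q)(\gamma+2)$ so that $c(1-q)+d=(2-q)(\alpha_2+\gamma+2)$, and conclude via $q<2$ and the previously established claim $\alpha_2+\gamma+2>0$. The algebraic identity you flagged as the delicate step does check out.
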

	
	\begin{proof}
		\begin{equation*}
			\begin{aligned}
				&cp + d\\
				&= (\gamma + 4)p + (2 - q)\alpha_2 + \gamma + 2q\\
				&> (\gamma + 4)(1 - q) + (2 - q)\alpha_2 + \gamma + 2q\\
				&= (2 - q)(\alpha_2 + \gamma + 2) > 0.
			\end{aligned}
		\end{equation*}
	\end{proof}
	Since $cp + d > 0, q < 2, c = \gamma + 4 > 0, \alpha_2 + \gamma + 2 > 0$, we deduce that $G(p)$ is decreasing with respect to $p$.
	\begin{claim}
		$p_1, q_1 > 0$.
	\end{claim}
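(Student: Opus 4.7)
The plan is to exploit the identity that underlay the proof of $\alpha_2+\gamma+2>0$ and then use the range $p>1-q$ that was already employed to prove $cp+d>0$. Writing $B=(\gamma+4)(ap+b)/(cp+d)$ and using $bc-ad=-(\gamma+2q)(\alpha_2+\gamma+2)$, direct expansion yields
\[
(\gamma+2)-B \;=\; \frac{(\gamma+2q)(\alpha_2+\gamma+2)}{cp+d}.
\]
The two preceding claims supply $\alpha_2+\gamma+2>0$ and $cp+d>0$; therefore the right-hand side has the same sign as $\gamma+2q$, and
\[
q_1 \;=\; \frac{\gamma+2q}{\gamma+2-B} \;=\; \frac{cp+d}{\alpha_2+\gamma+2} \;>\; 0,
\]
settling the $q_1$ half of the claim with no further work.

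For $p_1>0$ it suffices to show $B>0$, i.e.\ $ap+b=(\gamma+2)p+(1-q)\alpha_2>0$, since $\gamma+4=(n-2)q+2>0$ and $cp+d>0$ take care of the other factors in $B=(\gamma+4)(ap+b)/(cp+d)$. This expression is linear in $p$ with positive slope $a=(n-2)q>0$, and at the boundary value $p=1-q$ it equals $(1-q)(\alpha_2+\gamma+2)$. When $q<1$ (which covers the entire admissible range when $n=3$, and the subregime $q<1$ when $n\ge 4$), both factors are positive, so $ap+b>0$ for every $p>1-q$, and the hypothesis $p+q>1$ from the theorem supplies exactly this bound. Combined with the $q_1$ argument, this proves the claim on the subrange $1/(n-1)<q<1$.

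For $n\ge 4$ and $1\le q<2$ the endpoint estimate degenerates, since $(1-q)(\alpha_2+\gamma+2)\le 0$. Here I would substitute the explicit formula for $\alpha_2$, use $\mathbb H(p_2,q)=0$ to eliminate the quadratic term in $p_2$, and then perform the reparametrization $w=\tfrac{n-2}{n-1}\bigl(p_2-\tfrac{1}{n-2}\bigr)$ together with $(n-2)q=-(n-2)w+w^{-1}+n-1$, exactly as in the proof of $\alpha_2+\gamma+q>0$. After clearing the denominator $(n-2)^{3}w^{3}$, the resulting polynomial in $w$ should factor into linear and quadratic pieces each manifestly positive on $0<w<\tfrac{n-1}{n-2}$, in direct analogy with the factorization $(w+1)\bigl[-(n-2)w+(n-1)\bigr]\bigl[(n-2)w^{2}+1\bigr]\bigl[(n-2)w+1\bigr]$ used there. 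Producing this polynomial factorization is the main obstacle; it is entirely mechanical once the substitution is set up, but without it the $q\ge 1$ portion of the claim cannot be closed by the endpoint argument alone.
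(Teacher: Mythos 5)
Your treatment of $q_1>0$ and of $p_1>0$ on the subrange $\tfrac{1}{n-1}<q<1$ coincides with the paper's: $q_1=(cp+d)/(\alpha_2+\gamma+2)>0$ follows from the two preceding claims, and $ap+b>(1-q)(\alpha_2+\gamma+2)>0$ follows from the endpoint $p=1-q$. (At $q=1$ the term $(1-q)\alpha_2$ vanishes and $ap+b=(n-2)p>0$ is immediate, so that boundary value is not actually part of the degenerate case.)

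The genuine gap is the case $n\ge 4$, $1<q<2$, which you leave open and for which your proposed strategy is not the one that works. The paper does not attack $ap+b>0$ head-on by the $w$-reparametrization; it instead proves the sign statement $\alpha_2<0$ in this regime, after which $ap+b=(\gamma+2)p+(1-q)\alpha_2$ is a sum of two nonnegative terms, the second strictly positive, because $\gamma+2=(n-2)q>0$ and both $1-q$ and $\alpha_2$ are negative. The inequality $\alpha_2<0$ is equivalent to $p_2<\frac{(2n-2)q-2}{(n-2)\left[(n-2)q^2-(n-1)q+2\right]}$, and the paper verifies this by substituting that bound into $\mathbb H(\cdot,q)$ and showing the resulting polynomial $J(q)$ is positive on $(1,2)$, using $J(2)=0$, $J(1)=\frac{4(n-1.75)(n-4)}{n-2}\ge 0$ for $n\ge 4$, and monotonicity of $K(t)=J(q)/(q-2)$ with $t=q-2$ --- not the substitution $w=\tfrac{n-2}{n-1}\bigl(p_2-\tfrac{1}{n-2}\bigr)$ from the proof of $\alpha_2+\gamma+q>0$. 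So what is missing is not a mechanical factorization of a polynomial you have already written down: you have not identified the intermediate statement ($\alpha_2<0$) that closes the case, and the hypothesis $n\ge 4$ enters precisely through $J(1)\ge 0$, a constraint your sketch does not account for.
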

	\begin{proof}
		We only need to show that 
		\begin{numcases}{}
			\frac{\gamma + 2q}{\gamma + 2 - B} > 0,\\
			B > 0.
		\end{numcases}
	In fact, we have 
	\begin{align*}
		\frac{\gamma + 2q}{\gamma + 2 - B} &= \frac{\gamma + 2q}{- \frac{bc - ad}{cp + d}}\\
		&= \frac{cp + d}{\alpha_2 + \gamma + 2} > 0.
	\end{align*}

	\begin{itemize}
		\item If $q \leq 1$, then
	\begin{align*}
		ap + b &:= (\gamma + 2)p + (1 - q)\alpha_2\\
		&= (n - 2)q p + (1 - q)\alpha_2\\
		&> (n - 2)q(1 - q) + (1 - q)\alpha_2\\
		&= (1 - q)(\alpha_2 + \gamma + 2) > 0.
	\end{align*}
	
	\item If $1 < q < 2$ and $n \geq 4$, then
	\begin{align*}
		&\alpha_2 < 0,\\
		\Leftrightarrow&  -(n - 1)p_2q - 2\frac{n - 1}{n - 2}q + 2p_2 + \frac{2}{n - 2} + (n - 2)p_2q^2 < 0,\\
		\Leftrightarrow & \Big[ (n - 2)q^2 - (n - 1)q + 2\Big]p_2 - \frac{2n - 2}{n - 2}q + \frac{2}{n - 2} < 0,\\
		\Leftrightarrow& p_2 < \frac{(2n - 2)q - 2}{(n - 2)\Big[ (n - 2)q^2 - (n - 1)q + 2\Big]}.
	\end{align*}
	Recall that
	\begin{equation*}
		0 = p_2^2 + \Big[\frac{n - 1}{n - 2}q - \frac{n^2 - 3}{(n - 2)^2}\Big]p_2 + \frac{1 - (n - 1)q}{(n - 2)^2}.
	\end{equation*}
	We need to show that 
	\begin{align*}
		0 &< \left[  \frac{(2n - 2)q - 2}{(n - 2)\Big[ (n - 2)q^2 - (n - 1)q + 2\Big]}\right]^2 \\
		&+ \Big[\frac{n - 1}{n - 2}q - \frac{n^2 - 3}{(n - 2)^2}\Big]\left[  \frac{(2n - 2)q - 2}{(n - 2)\Big[ (n - 2)q^2 - (n - 1)q + 2\Big]}\right] + \frac{1 - (n - 1)q}{(n - 2)^2}.
	\end{align*}
	\begin{claim}\label{lemma_q}
		The above condition holds for any $n \geq 4, 1 < q < 2$.
	\end{claim}
	 So
	\begin{align*}
		ap + b &:= (\gamma + 2)p + (1 - q)\alpha_2\\
		&> (1 - q)\alpha_2 > 0.
	\end{align*}
	\end{itemize}
	
	\end{proof}
	
	\begin{proof}[Proof of claim \ref{lemma_q}]
		Firstly we can rewrite the condition as
		\begin{align*}
			0 &< \left[  \frac{(2n - 2)q - 2}{ (n - 2)q^2 - (n - 1)q + 2}\right]^2 \\
		&+ \Big[(n - 1)q - \frac{n^2 - 3}{n - 2}\Big]  \frac{(2n - 2)q - 2}{ (n - 2)q^2 - (n - 1)q + 2}+ 1 - (n - 1)q,\\
		\Leftrightarrow 0&< \Big[ (2n - 2)q - 2 \Big]^2 + \Big[(n - 1)q - \frac{n^2 - 3}{n - 2}\Big]\cdot \Big[(2n - 2)q - 2\Big]\Big[ (n - 2)q^2 - (n - 1)q + 2\Big]\\
		&+ \Big[ (n - 2)q^2 - (n - 1)q + 2\Big]^2\cdot \Big[1 - (n - 1)q \Big],\\
		\Leftrightarrow 0&< 4\Big[ (n - 1)q - 1 \Big] + 2\Big[(n - 1)q - \frac{n^2 - 3}{n - 2}\Big]\cdot \Big[ (n - 2)q^2 - (n - 1)q + 2\Big]\\
		&- \Big[ (n - 2)q^2 - (n - 1)q + 2\Big]^2 =: J(q)
		\end{align*}
		Note that $J(2) \equiv 0$ and
		\begin{align*}
			J(1) &= 4(n - 2) + 2\Big[n - 1 - \frac{n^2 - 3}{n - 2}\Big] - 1 \\
			&= \frac{4(n - 1.75)(n - 4)}{n - 2} \geq 0,
		\end{align*}
		when $n \geq 4$. 
		Define $t = q - 2$, then
		\begin{align*}
			J(q) &= 4\Big[ (n - 1)q - 1 \Big] + 2\Big[(n - 1)q - \frac{n^2 - 3}{n - 2}\Big]\cdot \Big[ (n - 2)q^2 - (n - 1)q + 2\Big]\\
		&- \Big[ (n - 2)q^2 - (n - 1)q + 2\Big]^2\\
		&= 4\Big[ (n - 1)q - 1 \Big] + \Big[-(n - 2)q^2 + 3(n - 1)q - 2 - \frac{2n^2 - 6}{n - 2}\Big]\cdot \Big[ (n - 2)q^2 - (n - 1)q + 2\Big]\\
		&= 4\Big[ (n - 1)(t + 2) - 1 \Big]\\
		& + \Big[-(n - 2)(t + 2)^2 + 3(n - 1)(t + 2) - 2 - \frac{2n^2 - 6}{n - 2}\Big]\cdot \Big[ (n - 2)(t + 2)^2 - (n - 1)(t + 2) + 2\Big]\\
		&= 4(n - 1)t + 4(2n - 3)\\
		&+ \Big[-(n - 2)t^2 - (n - 5)t - \frac{4n - 6}{n - 2}\Big]\cdot \Big[ (n - 2)t^2 + (3n - 7)t + 2n - 4 \Big]\\
		&= -(n - 2)^2t^4 - (4n^2 - 20n + 24)t^3 - (5n^2 - 26n + 37)t^2 - (n - 5)(2n - 4)t \\
		&- \frac{(4n - 6)(3n - 7)}{n - 2}t + 4(n - 1)t.
		\end{align*}
		Define 
		\begin{align*}
			K(t) &= \frac{1}{q - 2}J(q)\\
			&= -(n - 2)^2t^3 - (4n^2 - 20n + 24)t^2 - (5n^2 - 26n + 37)t - \frac{2(n - 1)^2(n - 3)}{n - 2}.
		\end{align*}
		Since
		\begin{align*}
			K'(t) =  -3(n - 2)^2t^2 - 2(4n^2 - 20n + 24)t - (5n^2 - 26n + 37),
		\end{align*}
		and 
		\begin{align*}
			\Delta = 4(4n^2 - 20n + 24)^2 - 12(n - 2)^2(5n^2 - 26n + 37),
		\end{align*}
		we know there are two cases:
		\begin{itemize}
			\item If $n = 4, 5, 6, 7$, then $\Delta < 0$ and $K(t)$ is decreasing. So for any $-1 < t < 0$, we have
			\begin{align*}
				K(t) < K(-1) < 0.
			\end{align*}
			\item If $n \geq 8$, then $K'(t)$ is decreasing when $-1 < t < 0$. So
			\begin{align*}
				K'(t) < K'(-1) = -2n - 1 < 0.
			\end{align*}
			Then $K(t)$ is decreasing.
		\end{itemize}

		
	\end{proof}

	\begin{claim}
		$G(1 - q) = 1$.
	\end{claim}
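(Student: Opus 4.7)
The plan is to prove this by direct algebraic substitution into the closed form of $G(p)$ derived just above the claim, since $p = 1 - q$ turns out to be exactly the value at which the denominator of the $p$-dependent term collapses against the numerator.

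First I would evaluate $cp + d = (\gamma+4)p + (2-q)\alpha_2 + \gamma + 2q$ at $p = 1 - q$. Expanding $(\gamma+4)(1-q) = \gamma - \gamma q + 4 - 4q$ and combining with $(2-q)\alpha_2 + \gamma + 2q$ gives
\begin{equation*}
c(1-q) + d = 2\gamma - \gamma q + 4 - 2q + (2-q)\alpha_2 = (2-q)(\gamma + 2) + (2-q)\alpha_2 = (2-q)(\alpha_2 + \gamma + 2).
\end{equation*}
This identity is the key cancellation and requires no use of the defining equation for $\alpha_2$ or $p_2$; it is purely formal.

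Then I would substitute this into
\begin{equation*}
G(p) = \frac{2(2-q)(\alpha_2 + \gamma + 2)}{c^2 p + cd} + \frac{\gamma+2}{\gamma+4} = \frac{2(2-q)(\alpha_2+\gamma+2)}{c(cp+d)} + \frac{\gamma+2}{\gamma+4}.
\end{equation*}
At $p = 1 - q$ the first term becomes
\begin{equation*}
\frac{2(2-q)(\alpha_2+\gamma+2)}{(\gamma+4)\,(2-q)(\alpha_2+\gamma+2)} = \frac{2}{\gamma+4},
\end{equation*}
using Claim 5.2 to ensure $\alpha_2 + \gamma + 2 > 0$ so the cancellation is legitimate. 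Adding $\frac{\gamma+2}{\gamma+4}$ yields $\frac{2+\gamma+2}{\gamma+4} = 1$, as claimed.

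There is essentially no obstacle here; the only point to double-check is that none of the factors cancelled is zero, which is guaranteed by $q < 2$, by $c = \gamma + 4 = (n-2)q + 2 > 0$, and by the previously established positivity $\alpha_2 + \gamma + 2 > 0$. Thus the claim reduces to the single algebraic identity $(\gamma+4)(1-q) + (2-q)\alpha_2 + \gamma + 2q = (2-q)(\alpha_2 + \gamma + 2)$, which is immediate.
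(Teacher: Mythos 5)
Your proof is correct, and it is the natural direct verification: the paper itself states this claim without proof, but the key cancellation $(\gamma+4)(1-q) + (2-q)\alpha_2 + \gamma + 2q = (2-q)(\alpha_2+\gamma+2)$ appears verbatim in the paper's proof of the neighboring claim that $cp+d>0$, so your argument is exactly the intended one. The remaining steps (nonvanishing of $\gamma+4=(n-2)q+2$ and of $\alpha_2+\gamma+2$, then $\frac{2}{\gamma+4}+\frac{\gamma+2}{\gamma+4}=1$) check out.
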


	Using $\gamma = (n - 2)q - 2$, we know that when $q \geq 1$,
	
	\begin{equation*}
		\begin{aligned}
			&n - \gamma - 4 = (n - 2)(1 - q) < 0,
		\end{aligned}
	\end{equation*}
	Then since 
	\begin{align*}
		&G(p) > 1 - \frac{2}{n},\\
				\Leftrightarrow &\frac{2(2 - q)(\alpha_2 + \gamma + 2)}{(\gamma + 4)p + (2 - q)\alpha_2 + \gamma + 2q} + \gamma + 2 > \frac{n - 2}{n}(\gamma + 4),\\
				\Leftrightarrow &\frac{2(2 - q)(\alpha_2 + \gamma + 2)}{(\gamma + 4)p + (2 - q)\alpha_2 + \gamma + 2q} > \frac{2}{n}(-\gamma + n - 4) ,\\
	\end{align*}
	so for any $p > 0$, $G(p) > 1 - \frac{2}{n}$. As a result, we only need to consider the case $\frac{1}{n - 1} < q < 1$.
	
	\begin{claim}
		When $\frac{1}{n - 1} < q < 1$, we have $G(p_2) > G(p_*) > 1 - \frac{2}{n}$. 
	\end{claim}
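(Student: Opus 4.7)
The plan is to decompose the two-sided inequality into its two halves and attack each separately, leveraging the monotonicity of $G(p)$ that was just established.

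For the first inequality $G(p_2) > G(p_*)$: since $G$ is decreasing in $p$, it suffices to prove $p_2 < p_*$. Because $p_2$ is the positive root of the quadratic $\mathbb{H}(\cdot, q) = p^2 + [\frac{n-1}{n-2}q - \frac{n^2-3}{(n-2)^2}]p + \frac{1-(n-1)q}{(n-2)^2}$ with positive leading coefficient, and since Claim \ref{09021818equ1} already gives $p_2 > \frac{1}{n-2}$, comparing $p_2$ with $p_* = \frac{(n-1)q^2 - 2nq + n + 2}{(n-2)(1-q)}$ reduces to showing $\mathbb{H}(p_*, q) > 0$. I would substitute the closed form of $p_*$ into $\mathbb{H}$, clear the common denominator $(n-2)^2(1-q)^2$, and expect the resulting numerator to be a polynomial in $q$ that admits a clean factorization with $(1-q)$ (or $(1-(n-1)q)$ combined with positive factors) making positivity on $\frac{1}{n-1} < q < 1$ transparent.

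For the second inequality $G(p_*) > 1 - \frac{2}{n}$: using $\gamma = (n-2)q - 2$, one computes
\begin{equation*}
1 - \frac{2}{n} - \frac{\gamma+2}{\gamma+4} = \frac{2(n-2)(1-q)}{n(\gamma+4)},
\end{equation*}
which is strictly positive when $q < 1$. Thus the desired bound is equivalent to
\begin{equation*}
\frac{(2-q)(\alpha_2 + \gamma + 2)}{(\gamma+4)p_* + (2-q)\alpha_2 + \gamma + 2q} > \frac{(n-2)(1-q)}{n(\gamma+4)}.
\end{equation*}
Positivity of the denominator on the left is already provided by the preceding claim. Cross-multiplying produces a polynomial inequality in $q, n, p_*, \alpha_2$. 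I would then substitute the closed-form expressions for $\alpha_2$ and $p_*$ and use the defining relation $\mathbb{H}(p_2, q) = 0$ to eliminate the quadratic occurrences of $p_2$, arriving at a polynomial inequality purely in $q$ and $n$. As in the preceding claim, the substitution $w = \frac{n-2}{n-1}(p_2 - \frac{1}{n-2})$ combined with $(n-2)q = -(n-2)w + w^{-1} + n - 1$ should convert the implicit relation into a rational parameterization and produce a factorable expression over the range $0 < w < \frac{n-1}{n-2}$.

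The main obstacle is exactly this algebraic reduction in the second step: $\alpha_2$ is a rational expression in $p_2$, and $p_2$ is only implicitly defined by $\mathbb{H}(p_2,q)=0$, so the computation quickly produces a high-degree polynomial in $q$ whose sign is not manifest. The same $w$-parameterization that resolved the proof of $\alpha_2 + \gamma + q > 0$ is the most promising tool, as it already converted a similarly intractable polynomial into the product $(w+1)[-(n-2)w+(n-1)][(n-2)w^2+1][(n-2)w+1]$; I would expect an analogous factorization here, with factors that are positive on $0 < w < \frac{n-1}{n-2}$ and $1/(n-1) < q < 1$.
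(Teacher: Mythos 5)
Your overall strategy (use the monotonicity of $G$ just established, then verify the endpoint inequality $G(p_*)>1-\tfrac{2}{n}$) is the right skeleton, and your computation of $1-\tfrac{2}{n}-\tfrac{\gamma+2}{\gamma+4}=\tfrac{2(n-2)(1-q)}{n(\gamma+4)}$ is correct. But the proposal has a genuine gap: at the one point where the claim actually requires work, you stop and say you ``expect'' a factorization. For the bound $G(p_*)>1-\tfrac{2}{n}$ you propose substituting the closed forms of $\alpha_2$ and $p_*$, eliminating $p_2$ via $\mathbb H(p_2,q)=0$, and reparameterizing by $w$ in the hope that the resulting high-degree polynomial factors as it did for $\mathbb F$. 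That is a plan, not a proof, and it is also the hard way. The paper's argument closes this in two lines: after cancelling $\gamma+4$ (note your displayed inequality carries a spurious extra factor of $\gamma+4$ on the right-hand side; the correct reduction is $\tfrac{(2-q)(\alpha_2+\gamma+2)}{cp_*+d}>\tfrac{(n-2)(1-q)}{n}$), the condition $G(p)>1-\tfrac{2}{n}$ is \emph{equivalent} to $(n-2)(1-q)p<n-2+(2-q)\alpha_2$, and then the previously established claim $\alpha_2+\gamma+q>0$ gives $n-2+(2-q)\alpha_2>n-2-(2-q)(\gamma+q)=(n-1)q^2-2nq+n+2=(n-2)(1-q)p_*$, which is exactly the required bound at $p=p_*$. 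In other words, the heavy $w$-computation you anticipate has already been done once (in proving $\alpha_2+\gamma+q>0$) and should be \emph{reused}, not repeated.

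For the first inequality $G(p_2)>G(p_*)$, your reduction to $\mathbb H(p_*,q)>0$ is also left unverified, and as stated it is slightly incomplete: since $\mathbb H(\tfrac{1}{n-2},q)<0$ identifies $p_2$ as the larger root, you need both $\mathbb H(p_*,q)>0$ \emph{and} $p_*>\tfrac{1}{n-2}$ to conclude $p_*>p_2$; neither is checked. So while nothing in your outline is wrongheaded, the proposal as written does not establish either half of the claim, and it misses the structural point that makes the paper's proof short.
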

	\begin{proof}
		\begin{equation*}
			\begin{aligned}
				&G(p) > 1 - \frac{2}{n},\\
				\Leftrightarrow &n(2 - q)(\alpha_2 + \gamma + 2) > -(\gamma - n + 4)\Big[ (\gamma + 4)p + (2 - q)\alpha_2 + \gamma + 2q\Big],\\
				\Leftrightarrow &(n - \gamma - 4)(\gamma + 4)p < n(2 - q)(\alpha_2 + \gamma + 2) - (n - \gamma - 4)\Big[ (2 - q)\alpha_2 + \gamma + 2q\Big].
			\end{aligned}
		\end{equation*}
		Using $\gamma = (n - 2)q - 2$, it follows that
		\begin{equation*}
			\begin{aligned}
				\Leftrightarrow &(n - 2)(1 - q)\Big[ (n - 2)q + 2\Big]p < \Big[n - 2 + (2 - q)\alpha_2\Big] \Big[ (n - 2)q + 2\Big],\\
				\Leftrightarrow & (n - 2)(1 - q)p < n - 2 + (2 - q)\alpha_2.
			\end{aligned}
		\end{equation*}
		
		In fact, we have
		\begin{equation}
			\begin{aligned}
				& n - 2 + (2 - q)\alpha_2\\
				&> n - 2 + (2 - q)(-\gamma - q)\\
				&= (n - 2)(1 - q)p_*.
			\end{aligned}
		\end{equation}

	\end{proof}

	Finally we give the proof the Lemma \ref{compare}:
	\begin{proof}[Proof of Lemma \ref{compare}]
		According to the definition of $\mathbb G(p, q)$ in \eqref{8.23G},
		\begin{equation}
			\begin{aligned}
				&\mathbb G(p, q) < 0,\\
				\Leftrightarrow &0 > p^2 + \frac{n(n - 1)q^2 - (n^2 +n - 1)q - n - 2}{(n - 1)^2q + n - 2}p - \frac{n q^2}{(n - 1)^2q + n - 2}. 
			\end{aligned}
		\end{equation}
		Suppose $p_M$ is the larger solution to the equation $\mathbb G(p, q)= 0$ with respect to $p$ for fixed $q$. Define $p_{min} = \max (1 - q, 0)$. 
		By \eqref{8.231125}, we only need to show that for any $p_{min} < p < p_M$, it holds that
		
		\begin{equation}\label{8.23.1131}
			\begin{aligned}
				& \mathbb H(p, q) < \mathbb  G(p, q) < 0,\\
			\Leftrightarrow	&\left[\frac{n(n - 1)q^2 - (n^2 +n - 1)q - n - 2}{(n - 1)^2q + n - 2} - \frac{n - 1}{n - 2}q + \frac{n^2 - 3}{(n - 2)^2}\right]p - \frac{n q^2}{(n - 1)^2q + n - 2}\\
				& - \frac{1 - (n - 1)q}{(n - 2)^2} > 0.
			\end{aligned}
		\end{equation}
		If $0 \leq q < 2$, then 
		\begin{align}
			&\frac{n(n - 1)q^2 - (n^2 +n - 1)q - n - 2}{(n - 1)^2q + n - 2} - \frac{n - 1}{n - 2}q + \frac{n^2 - 3}{(n - 2)^2} > 0,\\
			\Leftrightarrow	&\Big[n(n - 1)q^2 - (n^2 +n - 1)q - n - 2\Big](n - 2)^2 > \Big[ (n - 1)^2q + n - 2 \Big]\Big[(n - 1)(n - 2)q - n^2 + 3\Big],\\
				\Leftrightarrow &-(n - 1)(n - 2) q^2 + (4n^2 - 10n + 5)q + n - 2 > 0,
		\end{align}
		where the last inequality naturally hold when $0 \leq q < 2, n\geq 3$.
		As a result, in order to get \eqref{8.23.1131}, we only need to show that
		\begin{equation*}
			\begin{aligned}
				&\frac{n(n - 1)q^2 - (n^2 +n - 1)q - n - 2}{(n - 1)^2q + n - 2}p_{min} - \frac{nq^2}{(n - 1)^2q + n - 2}\\
				&> \Big[\frac{n - 1}{n - 2}q - \frac{n^2 - 3}{(n - 2)^2}\Big]p_{min} + \frac{1 - (n - 1)q}{(n - 2)^2}.
			\end{aligned}
		\end{equation*}
		In fact, when $1\leq q < 2$, we have
		\begin{equation*}
			\begin{aligned}
				&- \frac{n q^2}{(n - 1)^2q + n - 2} - \frac{1 - (n - 1)q}{(n - 2)^2}\\
				&= \frac{-n(n - 2)^2q^2 + (n - 1)^3 q^2 - (n - 1)^2q - n + 2 + (n - 2)(n - 1)q}{\Big[(n - 1)^2q + n - 2\Big](n - 2)^2}\\
				&= \frac{(n^2 - n - 1)q^2 - (n - 1)q - n + 2 }{\Big[(n - 1)^2q + n - 2\Big](n - 2)^2}\\
				&\geq \frac{(n^2 - n - 1) - (n - 1) - n + 2 }{\Big[(n - 1)^2q + n - 2\Big](n - 2)^2} > 0.
			\end{aligned}
		\end{equation*}
		Thus we only need to consider the case $\frac{1}{n - 1} < q < 1$. At this time, $p_{min} = 1 - q$.
		\begin{equation*}
			\begin{aligned}
				&\frac{n(n - 1)q^2 - (n^2 +n - 1)q - n - 2}{(n - 1)^2q + n - 2}(1 - q) - \frac{nq^2}{(n - 1)^2q + n - 2}\\
				&> \Big[\frac{n - 1}{n - 2}q - \frac{n^2 - 3}{(n - 2)^2}\Big](1 - q) + \frac{1 - (n - 1)q}{(n - 2)^2},\\
				\Leftrightarrow &(n - 2)^2 \Bigg\{\Big[ n(n - 1)q^2 - (n^2 +n - 1)q - n - 2\Big](1 - q) - nq^2 \Bigg\}\\
				&> \Big[  (n - 1)^2q + n - 2 \Big] \Bigg\{  \Big[(n - 1)(n - 2)q - n^2 + 3\Big](1 - q) + 1 - (n - 1)q\Bigg\}\\
			\end{aligned}
		\end{equation*}

		\begin{equation*}
			\begin{aligned}
				\Leftrightarrow & (n - 2)^2 \Bigg\{ -n(n - 1)q^3 + (2n^2 - n - 1)q^2 - (n^2 - 3)q - n - 2 \Bigg\}\\
				&> \Big[  (n - 1)^2q + n - 2 \Big] \Bigg\{ -(n - 1)(n - 2)q^2 + (2n^2 - 4n)q - n^2 + 4 \Bigg\},\\
				\Leftrightarrow & (n - 1)(n - 2)q^3 -  4(n - 1)(n - 2)q^2 + 4(n - 1)(n - 2)q > 0.
			\end{aligned}
		\end{equation*}
		
	\end{proof}

\end{CJK}

\bibliography{MN241217}	
\bibliographystyle{plain}

\end{document}